\def\cl{\centerline}
\def\vs{\vspace*}
\numberwithin{equation}{section}
\newtheorem{theo}{Theorem}[section]
\newtheorem{defi}[theo]{Definition}
\newtheorem{coro}[theo]{Corollary}
\newtheorem{lemm}[theo]{Lemma}
\newtheorem{exam}[theo]{Example}
\newtheorem{prop}[theo]{Proposition}
\newtheorem{rema}[theo]{Remark}
\begin{document}
\begin{center}
{\bf\large Extending structures for Lie bialgebras}
\end{center}

\cl{Yanyong Hong
}

\vs{8pt}

{\small\footnotesize
\parskip .005 truein
\baselineskip 3pt \lineskip 3pt
\noindent{{\bf Abstract: Let $(\mathfrak{g}, [\cdot,\cdot], \delta_\mathfrak{g})$ be a fixed Lie bialgebra, $E$ be a vector space containing $\mathfrak{g}$ as a subspace and $V$ be a complement of $\mathfrak{g}$ in $E$. A natural problem is that how to classify all Lie bialgebraic structures on $E$ such that $(\mathfrak{g}, [\cdot,\cdot], \delta_\mathfrak{g})$ is a Lie sub-bialgebra up to an isomorphism of Lie bialgebras whose restriction on $\mathfrak{g}$ is the identity map. This problem is called the extending structures problem. In this paper, we introduce a general co-product on $E$, called the unified co-product of
$(\mathfrak{g},\delta_\mathfrak{g})$ by $V$. With this unified co-product and the unified product of $(\mathfrak{g}, [\cdot,\cdot])$ by $V$ developed in \cite{AM1}, the unified bi-product of $(\mathfrak{g}, [\cdot,\cdot], \delta_\mathfrak{g})$ by $V$ is introduced. Moreover, we show that any $E$ in the extending structures problem is isomorphic to a unified bi-product of $(\mathfrak{g}, [\cdot,\cdot], \delta_\mathfrak{g})$ by $V$. Then an object $\mathcal{HBI}_{\mathfrak{g}}^2(V,\mathfrak{g})$ is constructed to classify all $E$ in the extending structures problem.
Moreover, several special unified bi-products are also introduced. In particular, the unified bi-products when $\text{dim} V=1$ are investigated in detail.
  \vs{5pt}

\noindent{\bf Key words:} Lie bialgebra, Extending structure
\parskip .001 truein\baselineskip 6pt \lineskip 6pt

\noindent{\it Mathematics Subject Classification (2010):} 17A30,  17B62, 17B65, 17B69.}}
\parskip .001 truein\baselineskip 6pt \lineskip 6pt

\section{Introduction}
Lie bialgebras were first introduced by Drinfel'd in \cite{Dr1} as the algebraic structures
underlying quantized enveloping algebras (quantum groups). A Lie bialgebra $(\mathfrak{g}, [\cdot,\cdot], \delta_\mathfrak{g})$ is a Lie algebra $(\mathfrak{g}, [\cdot,\cdot])$ with a Lie co-bracket $\delta_\mathfrak{g}: \mathfrak{g}\rightarrow \mathfrak{g}\otimes \mathfrak{g}$ satisfying some compatibility conditions.  Since the Lie algebra of a Poisson-Lie group is a Lie bialgebra, the theory of Lie bialgebra is also closely related with the theory of Poisson-Lie group.  Moreover, Lie bialgebras and Poisson-Lie groups have applications in the theory of Poisson-Lie $T$-dual sigma models (see \cite{KS, K}). Therefore, the classification of Lie bialgebras is important in the theories of quantum groups and Poisson Lie groups. In the aspect of classifications, there have been many works. For example, many interesting examples of Lie bialgebras based on complex semisimple Lie algebras were presented by Drinfel'd in \cite{Dr1}, Lie bialgebras of low dimensions over different fields were classified in \cite{ARK, FJ1, G, J, RHR, Z1, Z2  } respectively, and Lie bialgebras on infinite-dimensional Lie algebras such as Witt algebra and Virasoro algebra were classified in \cite{T, NT} and so on. These works are based on the same idea which is to classify all Lie bialgebraic structures on a given Lie algebra.

Another method to construct Lie bialgebraic structures is through extensions of a Lie bialgebra by another Lie bialgebra. Such extensions were investigated in \cite{B, B1, FJ, Ma} and so on.
In this paper, we investigate a more general problem as follows.\\
{\bf Extending structures problem}: Let $(\mathfrak{g}, [\cdot, \cdot], \delta_{\mathfrak{g}})$ be a Lie bialgebra and $E$ a vector space containing $\mathfrak{g}$ as a subspace. Describe and classify all Lie bialgebraic structures on $E$ such that $(\mathfrak{g}, [\cdot, \cdot], \delta_{\mathfrak{g}})$  is a Lie sub-bialgebra of $(E, [\cdot, \cdot], \delta_{E})$ up to an isomorphism of Lie bialgebras whose restriction on $\mathfrak{g}$ is the identity map.

Similar problems for Hopf algebras and Lie algebras were also investigated in \cite{AM2} and \cite{AM1} respectively. It is a natural and important problem from the point of view of algebraic theory, i.e. how to obtain a `big' Lie bialgebra from a given `small' Lie bialgebra. Motivated by the theory of extending structures for Lie algebras in \cite{AM1}, we first introduce a definition of unified co-product for Lie coalgebras. By means of unified co-product, we show that any Lie coalgebra structure on $E$ such that $(\mathfrak{g},\delta_{\mathfrak{g}})$ is a Lie sub-coalgebra is isomorphic to a unified co-product of $(\mathfrak{g},\delta_{\mathfrak{g}})$ by $V$ which is a complement of $\mathfrak{g}$ in $E$. Then an object $\mathcal{HC}_\mathfrak{g}^2(V,\mathfrak{g})$ is constructed to  classify extending structures for Lie coalgebras.  Note that a Lie bialgebra is both a Lie algebra and a Lie coalgebra. With the theories of extending structures for Lie algebras and Lie coalgebras, a unified bi-product for Lie bialgebras is naturally presented and we also construct an object $\mathcal{HBI}_\mathfrak{g}^2(V,\mathfrak{g})$ to give a characterization of extending structures for Lie bialgebras. Moreover, several special unified bi-products for Lie bialgebras are also introduced. In addition, we investigate the unified bi-products when $\text{dim}V=1$ in detail. It is shown that $\mathcal{HBI}_\mathfrak{g}^2(V,\mathfrak{g})$ can be computed in this case.

This paper is organized as follows. In Section 2, we recall some preliminaries about Lie bialgebras and unified product for Lie algebras. In Section 3, we introduce  a definition of unified co-product for Lie coalgebras and construct an object $\mathcal{HC}_\mathfrak{g}^2(V,\mathfrak{g})$ to characterize extending structures for Lie coalgebras. In Section 4, a general theory of extending structures for Lie bialgebras is given. Section 5 is devoted to introducing some special unified bi-products for Lie bialgebras, such as crossed bi-product and double cross sum. Finally, in Section 6, the unified bi-products when $\text{dim}V=1$ are investigated in detail and an example to compute $\mathcal{HBI}_\mathfrak{g}^2(V,\mathfrak{g})$ is presented.

Throughout this paper, $k$ is a field with $\text{Char} k=0$ and $\mathbb{C}$ is the field of complex numbers. Set $k^\ast=k\backslash\{0\}$. All vector spaces, Lie algebras, Lie coalgebras, Lie bialgebras, linear or bilinear maps are over $k$. $I$ usually represents the identity map.

\section{Preliminaries on Lie bialgebras}
In this section, we recall some definitions about Lie bialgebras and some facts about unified product for Lie algebras.

Recall that a {\bf Lie algebra} $\mathfrak{g}$ is a vector space endowed with a linear map $[\cdot, \cdot]: \mathfrak{g}\otimes \mathfrak{g}\rightarrow \mathfrak{g}$ satisfying
\begin{eqnarray*}
&&[a, a]=0,\\
&&[a,[b,c]]=[[a,b],c]+[b,[a,c]],\;\;\;\text{(Jacobi identity)}
\end{eqnarray*}
for all $a, b, c\in \mathfrak{g}$.
Let $\mathfrak{g}$ be a Lie algebra. A {\bf right $\mathfrak{g}$-module} is a vector space $V$ together with a linear map $\lhd: V\otimes \mathfrak{g}\rightarrow V$ satisfying
\begin{eqnarray*}
x\lhd [a, b]=(x\lhd a)\lhd b-(x\lhd b)\lhd a,\;\;x\in V, a, b\in \mathfrak{g}.
\end{eqnarray*}

A {\bf Lie coalgebra} \cite{Mi} is a vector space $\mathfrak{g}$ equipped with a linear map $\delta_\mathfrak{g}:\mathfrak{g}\rightarrow \mathfrak{g}\otimes \mathfrak{g}$ satisfying
\begin{eqnarray*}
&&\delta_\mathfrak{g}=-\tau \delta_\mathfrak{g},\\
&&(I\otimes \delta_\mathfrak{g})\delta_\mathfrak{g}-\tau_{12}(I\otimes \delta_\mathfrak{g})\delta_\mathfrak{g}=(\delta_\mathfrak{g}\otimes I)\delta_\mathfrak{g},\;\;\; \text{(co-Jacobi identity)}
\end{eqnarray*}
where $\tau: \mathfrak{g}\otimes \mathfrak{g}\rightarrow \mathfrak{g}\otimes \mathfrak{g}$,
$\tau_{12}:\mathfrak{g}\otimes \mathfrak{g}\otimes \mathfrak{g}\rightarrow \mathfrak{g}\otimes \mathfrak{g}\otimes \mathfrak{g}$ are the twistings defined by
\begin{eqnarray*}
\tau (a\otimes b)=b\otimes a, ~~~~\tau_{12}(a\otimes b\otimes c)=b\otimes a \otimes c, \;\;(\text{$a$, $b$, $c\in \mathfrak{g}$}).
\end{eqnarray*}
We denote it by $(\mathfrak{g}, \delta_\mathfrak{g})$.

A {\bf Lie bialgebra} \cite{CP} is a triple $(\mathfrak{g}, [\cdot,\cdot],\delta_\mathfrak{g})$ such that $(\mathfrak{g}, [\cdot, \cdot])$ is a Lie algebra,
$(\mathfrak{g}, \delta_\mathfrak{g})$ is a Lie coalgebra, and they satisfy the cocycle condition
\begin{eqnarray}
\label{lia3}a.\delta_\mathfrak{g}(b)-b.\delta_\mathfrak{g}(a)=\delta_\mathfrak{g}([a,b]),\;\;\forall a, b\in \mathfrak{g},
\end{eqnarray}
where $a. \delta_\mathfrak{g}(b)=\sum ( [a, b_{(1)}]\otimes b_{(2)}+b_{(1)}\otimes [a, b_{(2)}])$, if $\delta_\mathfrak{g}(b)=\sum b_{(1)}\otimes b_{(2)}$.

A {\bf homomorphism of Lie bialgebras} $\varphi: (\mathfrak{g}_1, [\cdot,\cdot],\delta_{\mathfrak{g}_1})\rightarrow (\mathfrak{g}_2, [\cdot,\cdot],\delta_{\mathfrak{g}_2})$ is both a homomorphism of Lie algebras and a homomorphism of Lie coalgebras, i.e. for all $a$, $b\in \mathfrak{g}_1$,
\begin{eqnarray*}
\varphi([a, b])=[\varphi(a), \varphi(b)],\;\; \delta_{\mathfrak{g}_2}\varphi(a)=(\varphi\otimes \varphi) \delta_{\mathfrak{g}_1}(a).
\end{eqnarray*}

In order to investigate extending structures problem for Lie bialgebras, we introduce the following definition.
\begin{defi}
Let $(\mathfrak{g}, [\cdot,\cdot],\delta_\mathfrak{g})$ be a Lie bialgebra, $E$ a vector space containing $\mathfrak{g}$.

Let $(E, \{\cdot,\cdot\}, \delta_E)$ and $(E,\{\cdot,\cdot\}^{'}, \delta_E^{'})$ (resp.  $(E, \delta_E)$ and $(E,\delta_E^{'})$) be two Lie bialgebras (resp. Lie coalgebras) containing $(\mathfrak{g}, [\cdot,\cdot],\delta_\mathfrak{g})$ (resp. $(\mathfrak{g}, \delta_\mathfrak{g})$) as a Lie sub-bialgebra (resp. Lie sub-coalgebra). If there is an isomorphism of Lie bialgebras $\varphi:(E, \{\cdot,\cdot\}, \delta_E)\rightarrow (E,\{\cdot,\cdot\}^{'}, \delta_E^{'})$ (resp. Lie coalgebras $\varphi:(E, \delta_E)\rightarrow (E,\delta_E^{'})$) whose restriction on $\mathfrak{g}$ is the identity map, then we call that $(E, \{\cdot,\cdot\}, \delta_E)$ and $(E,\{\cdot,\cdot\}^{'}, \delta_E^{'})$ (resp. $(E, \delta_E)$ and $(E,\delta_E^{'})$) are {\bf equivalent}. Denote it by $(E, \{\cdot,\cdot\}, \delta_E)\equiv (E,\{\cdot,\cdot\}^{'}, \delta_E^{'})$ (resp. $(E, \delta_E)\equiv (E, \delta_E^{'})$).
\end{defi}

Obviously, $\equiv$ is an equivalence relation on the set of all Lie bialgebraic (resp. Lie coalgebraic) structures on $E$ containing $(\mathfrak{g}, [\cdot,\cdot],\delta_\mathfrak{g})$ (resp. $(\mathfrak{g},\delta_\mathfrak{g})$) as a Lie sub-bialgebra (resp. Lie sub-coalgebra). We use $BExtd(E,\mathfrak{g})$ (resp. $CExtd(E,\mathfrak{g})$) to denote the set of all equivalence classes via $\equiv$. Therefore, for answering the extending structures problem for Lie bialgebras, we only need to characterize $BExtd(E,\mathfrak{g})$. Since a Lie bialgebra is both a Lie algebra and a Lie coalgebra, we need to know the theory of extending structures for Lie algebras and investigate $CExtd(E,\mathfrak{g})$.

Finally, we recall some facts about extending structures for Lie algebras developed in \cite{AM1}.
\begin{defi}
Let $\mathfrak{g}$ be a Lie algebra and $V$ a vector space. An {\bf extending datum} of $\mathfrak{g}$ by $V$ is a system $\Omega(\mathfrak{g},V)=(\lhd, \rhd, f, \{\cdot,\cdot\})$ consisting of four bilinear maps
\begin{eqnarray*}
\lhd: V\times \mathfrak{g}\rightarrow V,~~~~\rhd: V\times \mathfrak{g}\rightarrow \mathfrak{g},~~~~f:  V\times V \rightarrow \mathfrak{g},~~~ \{\cdot,\cdot\}:V\times V \rightarrow V.
\end{eqnarray*}
Let $\Omega(\mathfrak{g}, V)$ be an extending datum of $\mathfrak{g}$ by $V$. Denote by $\mathfrak{g}\natural_{\Omega(\mathfrak{g},V)}V=\mathfrak{g}\natural V$ the vector space $E=\mathfrak{g}\oplus V$ with the bilinear map
$[\cdot,\cdot]: E\times E \rightarrow E$ given by
\begin{eqnarray}\label{qL1}
[a+x, b+y]:=[a,b]+x\rhd b-y\rhd a+f(x,y)+x\lhd b-y\lhd a+\{x,y\},
\end{eqnarray}
for all $a$, $b\in \mathfrak{g}$, $x$, $y\in V$.
If $\mathfrak{g}\natural V$ is a Lie algebra with the Lie bracket given by (\ref{qL1}), then $\mathfrak{g}\natural V$ is called
the {\bf unified product} of $\mathfrak{g}$ and $\Omega(\mathfrak{g},V)$, and $\Omega(\mathfrak{g},V)$ is called a {\bf Lie extending structure} of $\mathfrak{g}$ by $V$.
\end{defi}

It was proved in Theorem 2.2 in \cite{AM1} that $\Omega(\mathfrak{g},V)$ is a Lie extending structure of $\mathfrak{g}$ by $V$ if and only if the following compatibility conditions hold for all $g$, $h\in \mathfrak{g}$, $x$, $y$, $z\in V$:
\begin{eqnarray*}
(LE1)~~&&f(x,x)=0,~~~\{x,x\}=0,\\
(LE2)~~&& \text{$(V,\lhd)$ is a right $\mathfrak{g}$-module},\\
(LE3)~~&&x\rhd [g,h]=[x\rhd g,h]+[g, x\rhd h]+(x\lhd g)\rhd h-(x\lhd h)\rhd g,\\
(LE4)~~&&\{x,y\}\lhd g=\{x,y\lhd g\}+\{x\lhd g,y\}+x\lhd(y\rhd g)-y\lhd(x\rhd g),\\
(LE5)~~&&\{x,y\}\rhd g=x\rhd(y\rhd g)-y\rhd(x\rhd g)+[g,f(x,y)]+f(x,y\lhd g)+f(x\lhd g,y),\\
(LE6)~~&& f(x,\{y,z\})+f(y,\{z,x\})+f(z,\{x,y\})+x\rhd f(y,z)+y\rhd f(z,x)+z\rhd f(x,y)=0,\\
(LE7)~~&&\{x,\{y,z\}\}+\{y,\{z,x\}\}+\{z,\{x,y\}\}+x\lhd f(y,z)+y\lhd f(z,x)+z\lhd f(x,y)=0.
\end{eqnarray*}

\section{Unified co-products for Lie coalgebras}
In this section, we introduce a definition of unified co-product associated with a Lie coalgebra $(\mathfrak{g},\delta_\mathfrak{g})$ and a vector space $V$. Using this definition, we construct an object $\mathcal{HC}_{\mathfrak{g}}^2(V,\mathfrak{g})$ to describe and classify all Lie coalgebraic structures on the vector space $E=\mathfrak{g}\oplus V$ such that $(\mathfrak{g},\delta_\mathfrak{g})$ is a Lie sub-coalgebra of $E$ up to an isomorphism of Lie coalgebras whose restriction on $\mathfrak{g}$ is the identity map.

\begin{defi}
Let $(\mathfrak{g},\delta_\mathfrak{g})$ be a Lie coalgebra and $V$ a vector space. An {\bf extending datum} of $(\mathfrak{g},\delta_\mathfrak{g})$ by $V$ is a system $\Omega^c(\mathfrak{g},\delta_\mathfrak{g},V)=(\Delta_E, \Delta_V, \delta_V)$ with three linear maps
\begin{eqnarray*}
\Delta_E: V\rightarrow \mathfrak{g}\otimes V,~~~~\Delta_V: V\rightarrow \mathfrak{g}\otimes \mathfrak{g},~~~~\delta_V: V\rightarrow V\otimes V.
\end{eqnarray*}
Suppose that $\Omega^c(\mathfrak{g},\delta_\mathfrak{g},V)$ is an extending datum of $(\mathfrak{g},\delta_\mathfrak{g})$ by $V$. Denote by $\mathfrak{g}\natural_{\Omega^c(\mathfrak{g},\delta_\mathfrak{g},V)}V=\mathfrak{g}\natural^cV$ the vector space $E=\mathfrak{g}\oplus V$ with the linear map
$\delta_E: E\rightarrow E\otimes E$ given by
\begin{eqnarray}\label{eq1}
\delta_E(a+x)=\delta_\mathfrak{g}(a)+\Delta_E(x)-\tau\Delta_E(x)+\Delta_V(x)+\delta_V(x),~~~\text{for all $a\in \mathfrak{g}$, $x\in V$.}
\end{eqnarray}
If $\mathfrak{g}\natural^cV$ is a Lie coalgebra with the Lie co-bracket given by (\ref{eq1}), then $\mathfrak{g}\natural^cV$ is called
the {\bf unified co-product} of $(\mathfrak{g},\delta_{\mathfrak{g}})$ and $\Omega^c(\mathfrak{g},\delta_{\mathfrak{g}},V)$, and $\Omega^c(\mathfrak{g},\delta_\mathfrak{g},V)$ is called a {\bf Lie coalgebraic extending structure} of $(\mathfrak{g},\delta_{\mathfrak{g}})$ by $V$.
Denote the set of all Lie coalgebraic extending structures of $(\mathfrak{g},\delta_{\mathfrak{g}})$ by $V$ by $\mathcal{LC}(\mathfrak{g},V)$.
\end{defi}

\begin{theo}\label{t1}
Let $(\mathfrak{g},\delta_{\mathfrak{g}})$ be a Lie coalgebra, $V$  a vector space and $\Omega^c(\mathfrak{g},\delta_{\mathfrak{g}},V)$ be an extending datum of
$(\mathfrak{g},\delta_{\mathfrak{g}})$ by $V$. Then $\mathfrak{g}\natural^cV$ is a unified co-product if and only if the following
compatibility conditions hold for all $x\in V$:
\begin{eqnarray*}
(CLE1)&&~~\Delta_V(x)=-\tau\Delta_V(x),~~~\delta_V(x)=-\tau\delta_V(x),\\
(CLE2)&&~~(I\otimes\Delta_V)\Delta_E(x)+(I\otimes \delta_\mathfrak{g})\Delta_V(x)-\tau_{12}(I\otimes \Delta_V)\Delta_E(x)-\tau_{12}(I\otimes \delta_\mathfrak{g})\Delta_V(x)\\
&&=-(\Delta_V\otimes I)\tau\Delta_E(x)
+(\delta_\mathfrak{g}\otimes I)\Delta_V(x),\\
(CLE3)&&~~(I\otimes \Delta_E)\Delta_E(x)-\tau_{12}(I\otimes \Delta_E)\Delta_E(x)=(\delta_\mathfrak{g}\otimes I)\Delta_E(x)+(\Delta_V\otimes I)\delta_V(x),\\
(CLE4)&&~~(I\otimes \delta_V)\Delta_E(x)-\tau_{12}(I\otimes \Delta_E)\delta_V(x)=(\Delta_E\otimes I)\delta_V(x),\\
(CLE5)&&~~(I\otimes \delta_V)\delta_V(x)-\tau_{12}(I\otimes \delta_V)\delta_V(x)=(\delta_V\otimes I)\delta_V(x).
\end{eqnarray*}

\end{theo}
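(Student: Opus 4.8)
The plan is to verify the two defining axioms of a Lie coalgebra — anti-cocommutativity $\delta_E=-\tau\delta_E$ and the co-Jacobi identity — for the map $\delta_E$ defined in (\ref{eq1}), and to show that together they are equivalent to (CLE1)--(CLE5). The guiding observation is that on $\mathfrak{g}$ the map $\delta_E$ restricts to $\delta_\mathfrak{g}$, whose image lies in $\mathfrak{g}\otimes\mathfrak{g}$; since $(\mathfrak{g},\delta_\mathfrak{g})$ is already a Lie coalgebra, both axioms hold automatically on elements $a\in\mathfrak{g}$. Hence it suffices to test both axioms on an arbitrary $x\in V$.

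First I would handle anti-cocommutativity. Decomposing $\delta_E(x)$ according to the four summands $E\otimes E=\mathfrak{g}\otimes\mathfrak{g}\oplus\mathfrak{g}\otimes V\oplus V\otimes\mathfrak{g}\oplus V\otimes V$, one sees that $\Delta_E(x)-\tau\Delta_E(x)$ is automatically $\tau$-anti-invariant, while the components $\Delta_V(x)\in\mathfrak{g}\otimes\mathfrak{g}$ and $\delta_V(x)\in V\otimes V$ are $\tau$-anti-invariant exactly when $\Delta_V(x)=-\tau\Delta_V(x)$ and $\delta_V(x)=-\tau\delta_V(x)$. Thus anti-cocommutativity on $V$ is equivalent to (CLE1); this is the routine part.

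The main work is the co-Jacobi identity on $x\in V$, and here the key device is to grade $E^{\otimes 3}=(\mathfrak{g}\oplus V)^{\otimes 3}$ by the number of tensor factors lying in $V$, producing homogeneous components of $V$-degree $0,1,2,3$. I would expand $(I\otimes\delta_E)\delta_E(x)$, $\tau_{12}(I\otimes\delta_E)\delta_E(x)$ and $(\delta_E\otimes I)\delta_E(x)$ using the decomposition of $\delta_E(x)$ above, and track which composite of the structure maps contributes to each $V$-degree: $\Delta_V$ lowers the degree, $\delta_\mathfrak{g}$ and $\Delta_E$ preserve it, and $\delta_V$ raises it. Since the co-Jacobi identity must hold in each homogeneous component separately, projecting onto $V$-degree $3$ isolates the terms built from $\delta_V$ alone and yields (CLE5); $V$-degree $2$ couples $\Delta_E$ with $\delta_V$ and yields (CLE4); $V$-degree $1$ mixes $\Delta_E$, $\delta_\mathfrak{g}$, $\Delta_V$ and $\delta_V$ and yields (CLE3); and $V$-degree $0$ lands in $\mathfrak{g}^{\otimes 3}$ and yields (CLE2).

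I expect the degree-$0$ projection to be the main obstacle, for two reasons. First, it is the only component in which several maps interact nontrivially, the subtle contribution being the one obtained by applying $\delta_E$ to the $V$-slot of the twisted term $-\tau\Delta_E(x)$ and retaining its $\Delta_V$-part, which produces the term $-(\Delta_V\otimes I)\tau\Delta_E(x)$ appearing in (CLE2). Second, keeping the actions of $\tau$ and $\tau_{12}$ consistent with the component decomposition requires care with signs and with the placement of each twisting on the correct pair of factors. The remaining projections are mechanical once a bookkeeping convention is fixed; conversely, reading the computation backwards shows that (CLE1)--(CLE5) are jointly sufficient, so the stated conditions are both necessary and sufficient.
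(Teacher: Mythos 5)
Your proposal is correct and follows essentially the same route as the paper: the paper likewise disposes of anti-cocommutativity as equivalent to (CLE1), expands the co-Jacobi identity on $x\in V$ via (\ref{eq1}), and then ``puts all terms in the same vector space together'' --- i.e.\ sorts by the tensor components of $E^{\otimes 3}$, which is exactly your grading by the number of $V$-factors --- to read off (CLE2)--(CLE5). Your identification of the cross term $-(\Delta_V\otimes I)\tau\Delta_E(x)$ as the delicate contribution in the degree-zero component matches the paper's computation.
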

\begin{proof}
By (\ref{eq1}), it is easy to see that $\delta_E(x)=-\tau\delta_E(x)$ holds if and only if $(CLE1)$ holds.
Then we only need to prove that co-Jacobi identity for $\delta_E$ holds for all $x\in V$ if and only if $(CLE2)$-$(CLE5)$ hold when $(CLE1)$ is satisfied.

According to (\ref{eq1}), $(I\otimes \delta_E)\delta_E(x)-\tau_{12}(I\otimes \delta_E)\delta_E(x)=
(\delta_E\otimes I)\delta_E(x)$ for all $x\in V$ if and only if
\begin{eqnarray*}
&&(I\otimes \Delta_E)\Delta_E(x)-(I\otimes \tau\Delta_E)\Delta_E(x)
+(I\otimes\Delta_V)\Delta_E(x)+(I\otimes \delta_V)\Delta_E(x)-(I\otimes\delta_\mathfrak{g})\tau\Delta_E(x)\\
&&+(I\otimes \delta_\mathfrak{g})\Delta_V(x)+(I\otimes\Delta_E)\delta_V(x)-(I\otimes \tau\Delta_E)\delta_V(x)
+(I\otimes \Delta_V)\delta_V(x)+(I\otimes \delta_V)\delta_V(x)\\
&&-\tau_{12}(I\otimes\Delta_E)\Delta_E(x)+\tau_{12}(I\otimes \tau\Delta_E)\Delta_E(x)-\tau_{12}(I\otimes\Delta_V)\Delta_E(x)-\tau_{12}(I\otimes \delta_V)\Delta_E(x)\\
&&+\tau_{12}(I\otimes \delta_\mathfrak{g})\tau\Delta_E(x)-\tau_{12}(I\otimes\delta_\mathfrak{g})\Delta_V(x)-\tau_{12}(I\otimes \Delta_E)\delta_V(x)+\tau_{12}(I\otimes \tau\Delta_E)\delta_V(x)\\
&&-\tau_{12}(I\otimes\Delta_V)\delta_V(x)-\tau_{12}(I\otimes\delta_V)\delta_V(x)\\
&&=(\delta_\mathfrak{g}\otimes I)\Delta_E(x)-(\Delta_E\otimes I)\tau\Delta_E(x)
+(\tau\Delta_E\otimes I)\tau\Delta_E(x)-(\Delta_V\otimes I)\tau\Delta_E(x)-(\delta_V\otimes I)\tau\Delta_E(x)\\
&&+(\delta_\mathfrak{g}\otimes I)\Delta_V(x)+(\Delta_E\otimes I)\delta_V(x)-(\tau\Delta_E\otimes I)\delta_V(x)+(\Delta_V\otimes I)\delta_V(x)+(\delta_V\otimes I)\delta_V(x).
\end{eqnarray*}
Since different terms in the equality above may be in different vector spaces, we put all terms in the same vector space together. Then it is easy to see that co-Jacobi identity holds if and only if
$(CLE2)$-$(CLE5)$ hold by $(CLE1)$.
\end{proof}
\begin{rema}
Note that $(V,\delta_V)$ is a Lie coalgebra by $(CLE1)$ and $(CLE5)$.
\end{rema}

Obviously, $\mathfrak{g}\natural^cV$ is a Lie coalgebra containing $(\mathfrak{g},\delta_\mathfrak{g})$ as a Lie sub-coalgebra. Next, we show that any Lie coalgebraic structure on $E$ containing $(\mathfrak{g},\delta_\mathfrak{g})$ as a Lie sub-coalgebra is isomorphic to such a unified co-product.
\begin{theo}\label{t2}
Let $(\mathfrak{g},\delta_\mathfrak{g})$ be a Lie coalgebra and $E$ a vector space containing $\mathfrak{g}$ as a subspace. Suppose that there is a Lie coalgebraic structure $(E,\delta_E)$ on $E$ such that  $(\mathfrak{g},\delta_\mathfrak{g})$ is a Lie sub-coalgebra of $E$. Then there exists a Lie coalgebraic extending structure $\Omega^c(\mathfrak{g},\delta_\mathfrak{g}, V)$ of $(\mathfrak{g},\delta_\mathfrak{g})$ by $V$ such that $(E,\delta)\cong \mathfrak{g}\natural^c V$ whose restriction on $\mathfrak{g}$ is the identity map.
\end{theo}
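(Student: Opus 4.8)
The plan is to realize the given co-bracket $\delta_E$ directly as a unified co-product by reading off the extending datum from a fixed complement, so that the required isomorphism is simply the identity map of $E$. First I would fix a linear complement $V$ of $\mathfrak{g}$ in $E$, giving $E=\mathfrak{g}\oplus V$, and let $p\colon E\to\mathfrak{g}$ and $q\colon E\to V$ be the associated projections (so that $p+q=I$ after composing with the inclusions, $p|_{\mathfrak{g}}=I$ and $q|_{V}=I$). I then define the three components of the extending datum by restricting $\delta_E$ to $V$ and projecting each tensor factor:
$$\Delta_V:=(p\otimes p)\,\delta_E|_V,\qquad \Delta_E:=(p\otimes q)\,\delta_E|_V,\qquad \delta_V:=(q\otimes q)\,\delta_E|_V,$$
which have the correct codomains $\mathfrak{g}\otimes\mathfrak{g}$, $\mathfrak{g}\otimes V$ and $V\otimes V$ respectively.

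The first key step is to check that $\delta_E$ reconstructs exactly to the formula (\ref{eq1}). Decomposing $E\otimes E=(\mathfrak{g}\otimes\mathfrak{g})\oplus(\mathfrak{g}\otimes V)\oplus(V\otimes\mathfrak{g})\oplus(V\otimes V)$ via $I\otimes I=(p+q)\otimes(p+q)$, we obtain on $V$
$$\delta_E=\Delta_V+\Delta_E+(q\otimes p)\,\delta_E|_V+\delta_V.$$
The only component not yet named is $(q\otimes p)\,\delta_E|_V$, and here the anticocommutativity $\delta_E=-\tau\delta_E$ does the work: applying $q\otimes p$ to both sides and using $(q\otimes p)\tau=\tau(p\otimes q)$ gives $(q\otimes p)\,\delta_E|_V=-\tau(p\otimes q)\,\delta_E|_V=-\tau\Delta_E$. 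Since $(\mathfrak{g},\delta_\mathfrak{g})$ is a Lie sub-coalgebra we also have $\delta_E|_\mathfrak{g}=\delta_\mathfrak{g}$, so for all $a\in\mathfrak{g}$, $x\in V$,
$$\delta_E(a+x)=\delta_\mathfrak{g}(a)+\Delta_E(x)-\tau\Delta_E(x)+\Delta_V(x)+\delta_V(x),$$
which is precisely (\ref{eq1}). Hence the co-bracket prescribed by (\ref{eq1}) for this datum is identical to the given $\delta_E$ on the underlying space $\mathfrak{g}\oplus V$.

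It then remains to argue that $\Omega^c(\mathfrak{g},\delta_\mathfrak{g},V)=(\Delta_E,\Delta_V,\delta_V)$ is genuinely a Lie coalgebraic extending structure, i.e. that $\mathfrak{g}\natural^c V$ is a Lie coalgebra. By the previous step its co-bracket equals the given $\delta_E$, which is a Lie co-bracket by hypothesis; so Theorem \ref{t1} (the ``only if'' direction) forces $(CLE1)$--$(CLE5)$ to hold automatically, and no separate verification of those identities is needed. The isomorphism in the statement is then the identity map $I\colon(E,\delta_E)\to\mathfrak{g}\natural^c V$, which visibly restricts to the identity on $\mathfrak{g}$. I do not expect a genuine obstacle here; the only point requiring care is the bookkeeping showing the $V\otimes\mathfrak{g}$-component is forced to be $-\tau\Delta_E$ by anticocommutativity, so that the reconstructed co-bracket has exactly the shape prescribed by (\ref{eq1}) rather than carrying an independent fourth map.
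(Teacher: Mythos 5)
Your proposal is correct and follows essentially the same route as the paper: you choose a complement $V$, define $\Delta_V$, $\Delta_E$, $\delta_V$ by the three projections of $\delta_E|_V$, and observe that the resulting unified co-product coincides with $(E,\delta_E)$ under the identity map. The only difference is that you spell out the details the paper leaves as ``easy to show,'' namely that anticocommutativity forces the $V\otimes\mathfrak{g}$-component to be $-\tau\Delta_E$ and that $(CLE1)$--$(CLE5)$ then hold automatically because the reconstructed co-bracket is literally the given Lie co-bracket.
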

\begin{proof}
Suppose that the natural projection from $E$ to $\mathfrak{g}$ is $\pi_1$ and $V=ker(\pi_1)$. Denote the natural projection from $E$ to $V$ by $\pi_2$. We define the following extending datum of $(\mathfrak{g},\delta_\mathfrak{g})$ by $V$ as follows:\\
\begin{eqnarray*}
&&\Delta_V: V\rightarrow \mathfrak{g}\otimes \mathfrak{g},~~~~\Delta_V(x)=(\pi_1\otimes \pi_1)\delta_E(x),\\
&&\delta_V: V\rightarrow V\otimes V,~~~~\delta_V(x)=(\pi_2\otimes \pi_2)\delta_E(x),\\
&&\Delta_E: V\rightarrow \mathfrak{g}\otimes V,~~~~\Delta_E(x)=(\pi_1\otimes \pi_2)\delta_E(x),
\end{eqnarray*}
for any $x\in V$. Then it is easy to show that $\Omega^c(\mathfrak{g},\delta_\mathfrak{g}, V)=(\Delta_E, \Delta_V, \delta_V)$ is a Lie coalgebraic extending structure and $\varphi: E\rightarrow \mathfrak{g}\natural^c V$ given by $\varphi(a)=\pi_1(a)+\pi_2(a)$ for all $a\in E$ is a Lie coalgebra isomorphism.
\end{proof}
\begin{lemm}\label{l1}
Let $\Omega^c(\mathfrak{g},\delta,V)=(\Delta_E, \Delta_V, \delta_V)$ and ${\Omega^c}^{'}(\mathfrak{g},\delta,V)=(\Delta^{'}_E, \Delta^{'}_V, \delta^{'}_V)$ be two Lie coalgebraic extending structures of $(\mathfrak{g},\delta_\mathfrak{g})$ by $V$ and
$\mathfrak{g}\natural^cV$, $\mathfrak{g}{\natural^c}^{'}V$ be the corresponding unified co-products. Then there exists a bijection between the set of all homomorphisms of Lie coalgebras $\varphi: \mathfrak{g}\natural^cV\rightarrow \mathfrak{g}{\natural^c}^{'}V$ whose restriction on $\mathfrak{g}$ is the identity map and the set of pairs $(p,q)$, where $p:V\rightarrow \mathfrak{g}$ and
$q:V\rightarrow V$ are two linear maps satisfying
\begin{eqnarray}
\label{q1}&&\delta_g(p(x))+\Delta^{'}_V(q(x))=(I\otimes p)\Delta_E(x)-(p\otimes I)\tau\Delta_E(x)+(p\otimes p)\delta_V(x)+\Delta_V(x),\\
&&\Delta_E^{'}(q(x))=(I\otimes q)\Delta_E(x)+(p\otimes q)\delta_V(x),\\
\label{q2}&&\delta_V^{'}(q(x))=(q\otimes q)\delta_V(x),
\end{eqnarray}
for all $x\in V$.

The bijection from the homomorphism of Lie coalgebras $\varphi=\varphi_{p,q}: \mathfrak{g}\natural^cV\rightarrow \mathfrak{g}{\natural^c}^{'}V$ to $(p,q)$ is given by $\varphi(a+x)=a+p(x)+q(x)$ for all $a\in \mathfrak{g}$ and $x\in V$. Moreover, $\varphi=\varphi_{p,q}$ is an isomorphism if and only if $q: V\rightarrow V$ is a linear isomorphism.
\end{lemm}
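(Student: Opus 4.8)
The plan is to reduce everything to a component computation on $V$. First I would observe that any linear map $\varphi:\mathfrak{g}\natural^cV\to\mathfrak{g}{\natural^c}'V$ with $\varphi|_\mathfrak{g}=I$ is completely determined by its restriction to $V$: since $E=\mathfrak{g}\oplus V$, for $x\in V$ we may write $\varphi(x)=p(x)+q(x)$ with $p(x)=\pi_1\varphi(x)\in\mathfrak{g}$ and $q(x)=\pi_2\varphi(x)\in V$, and conversely every pair of linear maps $p:V\to\mathfrak{g}$, $q:V\to V$ produces such a $\varphi$ via $\varphi(a+x)=a+p(x)+q(x)$. This already sets up the asserted bijection at the level of linear maps, so the real content is to identify, under this correspondence, the homomorphisms of Lie coalgebras with the pairs $(p,q)$ satisfying \eqref{q1}--\eqref{q2}.

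Write $\delta_E$ and $\delta_E'$ for the co-brackets of $\mathfrak{g}\natural^cV$ and $\mathfrak{g}{\natural^c}'V$. The homomorphism condition $\delta_E'\varphi=(\varphi\otimes\varphi)\delta_E$ holds automatically on $\mathfrak{g}$, since $\delta_\mathfrak{g}(a)\in\mathfrak{g}\otimes\mathfrak{g}$ and $\varphi|_\mathfrak{g}=I$ give $\delta_E'\varphi(a)=\delta_\mathfrak{g}(a)=(\varphi\otimes\varphi)\delta_E(a)$ for $a\in\mathfrak{g}$. Hence it suffices to test it on $x\in V$. Here I would expand both sides by \eqref{eq1}: the left side $\delta_E'(p(x)+q(x))$ equals $\delta_\mathfrak{g}(p(x))+\Delta_V'(q(x))+\Delta_E'(q(x))-\tau\Delta_E'(q(x))+\delta_V'(q(x))$, while the right side $(\varphi\otimes\varphi)\bigl(\Delta_E(x)-\tau\Delta_E(x)+\Delta_V(x)+\delta_V(x)\bigr)$ is computed by pushing $\varphi\otimes\varphi$ through each tensor factor, using $\varphi=I$ on every $\mathfrak{g}$-slot and $\varphi=p+q$ on every $V$-slot. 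I would then sort all resulting terms by the decomposition $E\otimes E=(\mathfrak{g}\otimes\mathfrak{g})\oplus(\mathfrak{g}\otimes V)\oplus(V\otimes\mathfrak{g})\oplus(V\otimes V)$ and equate components; the $\mathfrak{g}\otimes\mathfrak{g}$, $\mathfrak{g}\otimes V$ and $V\otimes V$ pieces yield exactly \eqref{q1}, the second relation in \eqref{q1}--\eqref{q2}, and \eqref{q2}, respectively.

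The one point requiring care — and the main (if minor) obstacle — is the fourth, $V\otimes\mathfrak{g}$, component, which at first sight is an extra condition. I would show it is redundant by applying $\tau$ to the $\mathfrak{g}\otimes V$ identity and invoking the skew-symmetry $\delta_V=-\tau\delta_V$ from $(CLE1)$. Indeed $\tau(I\otimes q)\Delta_E(x)=(q\otimes I)\tau\Delta_E(x)$ and $\tau(p\otimes q)\delta_V(x)=(q\otimes p)\tau\delta_V(x)=-(q\otimes p)\delta_V(x)$, so applying $\tau$ to $\Delta_E'(q(x))=(I\otimes q)\Delta_E(x)+(p\otimes q)\delta_V(x)$ reproduces precisely the $V\otimes\mathfrak{g}$ component $-\tau\Delta_E'(q(x))=-(q\otimes I)\tau\Delta_E(x)+(q\otimes p)\delta_V(x)$. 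Thus the homomorphism condition on $V$ is equivalent to the three stated relations, and the claimed bijection $\varphi_{p,q}\leftrightarrow(p,q)$ follows.

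Finally, for the isomorphism statement I would use that $\varphi=\varphi_{p,q}$ is block lower-triangular with respect to $E=\mathfrak{g}\oplus V$, namely $\varphi(a+x)=(a+p(x))+q(x)$, acting as the identity on the $\mathfrak{g}$-block and as $q$ on the $V$-block. Such a map is a linear bijection if and only if $q$ is a linear isomorphism: if $q$ is injective (resp. surjective) then so is $\varphi$, and conversely a kernel vector $x\neq0$ of $q$ gives $\varphi(-p(x)+x)=0$ (resp. a $y\notin\mathrm{im}\,q$ is not hit in the $V$-component). Since the inverse of a bijective homomorphism of Lie coalgebras is again such a homomorphism, $\varphi$ is an isomorphism of Lie coalgebras exactly when $q:V\to V$ is a linear isomorphism. \QED
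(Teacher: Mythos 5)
Your proposal is correct and follows essentially the same route as the paper: write $\varphi(a+x)=a+p(x)+q(x)$, expand $\delta_E'\varphi(x)$ and $(\varphi\otimes\varphi)\delta_E(x)$ via (\ref{eq1}), and compare components in $E\otimes E$. The only addition is that you explicitly verify the $V\otimes\mathfrak{g}$ component is redundant (via $\tau$ and the skew-symmetry $\delta_V=-\tau\delta_V$ from $(CLE1)$), a point the paper leaves implicit in its ``one can directly obtain'' step.
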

\begin{proof}
Let $\varphi$ be a Lie coalgebra homomorphism from $\mathfrak{g}\natural^cV$ to $\mathfrak{g}{\natural^c}^{'}V$ whose restriction on $\mathfrak{g}$ is the identity map. Obviously, $\varphi$ is determined by two linear maps $p: V\rightarrow \mathfrak{g}$ and $q: V\rightarrow V$ such that
$\varphi(a+x)=a+p(x)+q(x)$ for all $a\in \mathfrak{g}$ and $x\in V$. Therefore, we only need to show that
$\varphi$ is a homomorphism of Lie coalgebras if and only if (\ref{q1})-(\ref{q2}) hold.  Note that $\delta^{'}_E\varphi(a)=(\varphi\otimes \varphi)\delta_E(a)$ for all $a\in \mathfrak{g}$. Then we consider that $\delta^{'}_E\varphi(x)=(\varphi\otimes \varphi)\delta_E(x)$ for all $x\in V$.
\begin{eqnarray*}
\delta^{'}_E\varphi(x)&=&\delta^{'}_E(p(x)+q(x))\\
&=&\delta_g(p(x))+\Delta_E^{'}(q(x))-\tau\Delta_E^{'}(q(x))+\Delta_V^{'}(q(x))+\delta_V^{'}(q(x)),
\end{eqnarray*}
and
\begin{eqnarray*}
(\varphi\otimes \varphi)\delta_E(x)&=&(\varphi\otimes \varphi)(\Delta_E(x)-\tau\Delta_E(x)+\Delta_V(x)+\delta_V(x))\\
&=&(I\otimes p)\Delta_E(x)+(I\otimes q)\Delta_E(x)-(p\otimes I)\tau\Delta_E(x)
-(q\otimes I)\tau\Delta_E(x)\\
&&+(I\otimes I)\Delta_V(x)+((p+q)\otimes (p+q))\delta_V(x).
\end{eqnarray*}
Then one can directly obtain that $\delta^{'}_E\varphi(x)=(\varphi\otimes \varphi)\delta_E(x)$ for all $x\in V$ if and only if (\ref{q1})-(\ref{q2}) hold. By the definition of $\varphi=\varphi_{p,q}$, it is easy to see that $\varphi=\varphi_{p,q}$ is an isomorphism if and only if $q: V\rightarrow V$ is a linear isomorphism.
\end{proof}
\begin{rema}
By (\ref{q2}), $q:(V, \delta_V)\rightarrow (V, \delta_V^{'})$ is a homomorphism of Lie coalgebras.
\end{rema}
\begin{defi}
Let $(\mathfrak{g},\delta_\mathfrak{g})$ be a Lie coalgebra and $V$ a vector space. Two Lie coalgebraic extending structures $\Omega^c(\mathfrak{g},\delta_\mathfrak{g},V)=(\Delta_E, \Delta_V, \delta_V)$ and ${\Omega^c}^{'}(\mathfrak{g},\delta_\mathfrak{g},V)=(\Delta^{'}_E, \Delta^{'}_V, \delta^{'}_V)$ of $(\mathfrak{g},\delta_\mathfrak{g})$ by $V$ are called {\bf equivalent}, if there exists a pair $(p,q)$ of linear maps, where $p: V\rightarrow \mathfrak{g}$ and $q\in Aut_k(V)$ such that $(\Delta^{'}_E, \Delta^{'}_V, \delta^{'}_V)$ can be obtained from $(\Delta_E, \Delta_V, \delta_V)$ by $(p,q)$ as follows:
\begin{eqnarray}
&&\label{equ1}\delta_V^{'}(x)=(q\otimes q)\delta_V(q^{-1}(x)),\\
&&\label{equ2}\Delta_E^{'}(x)=(I\otimes q)\Delta_E(q^{-1}(x))+(p\otimes q)\delta_V(q^{-1}(x)),\\
&&\Delta^{'}_V(x)=(I\otimes p)\Delta_E(q^{-1}(x))-(p\otimes I)\tau\Delta_E(q^{-1}(x))\nonumber\\
&&\label{equ3}+(p\otimes p)\delta_V(q^{-1}(x))+\Delta_V(q^{-1}(x))-\delta_\mathfrak{g}(p(q^{-1}(x))),
\end{eqnarray}
for all $x\in V$. We denote it by $\Omega^c(\mathfrak{g},\delta_\mathfrak{g},V)\equiv{\Omega^c}^{'}(\mathfrak{g},\delta_\mathfrak{g},V)$.
\end{defi}
\begin{theo}
Let $(\mathfrak{g},\delta_\mathfrak{g})$ be a Lie coalgebra, $E$ a vector space containing $\mathfrak{g}$ as a subspace and
$V$ be a $\mathfrak{g}$-complement in $E$. Denote $\mathcal{HC}_{\mathfrak{g}}^2(V,\mathfrak{g}):=\mathcal{LC}(\mathfrak{g},V)/\equiv$. Then the map
\begin{eqnarray}
\mathcal{HC}_{\mathfrak{g}}^2(V,\mathfrak{g})\rightarrow CExtd(E,\mathfrak{g}),~~~~\overline{\Omega(\mathfrak{g},\delta, V)}\rightarrow (\mathfrak{g}\natural^c V,\delta_E)
\end{eqnarray}
is bijective, where $\overline{(\mathfrak{g},\delta_\mathfrak{g}, V)}$ is the equivalence class of $\Omega(\mathfrak{g},\delta_\mathfrak{g}, V)$ under $\equiv$.
\end{theo}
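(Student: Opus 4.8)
The plan is to combine the two structural results already established: Theorem~\ref{t2}, which guarantees that every relevant Lie coalgebra structure on $E$ arises as a unified co-product, and Lemma~\ref{l1}, which parametrizes the isomorphisms fixing $\mathfrak{g}$ by pairs $(p,q)$. The map in question sends a class $\overline{\Omega^c(\mathfrak{g},\delta_\mathfrak{g},V)}$ to the class of the Lie coalgebra $(\mathfrak{g}\natural^cV,\delta_E)$ in $CExtd(E,\mathfrak{g})$, and I would verify, in order, that this assignment is well defined, surjective, and injective, so that it is a bijection.

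For well-definedness, first note that $\equiv$ on $\mathcal{LC}(\mathfrak{g},V)$ is genuinely an equivalence relation: reflexivity, symmetry and transitivity are inherited from the groupoid of isomorphisms $\varphi_{p,q}$ via Lemma~\ref{l1}, since composing and inverting such isomorphisms corresponds to composing and inverting the pairs $(p,q)$ with $q$ invertible. I then need to check that if $\Omega^c(\mathfrak{g},\delta_\mathfrak{g},V)\equiv{\Omega^c}'(\mathfrak{g},\delta_\mathfrak{g},V)$, witnessed by a pair $(p,q)$ with $q\in Aut_k(V)$ satisfying (\ref{equ1})--(\ref{equ3}), then the corresponding unified co-products are equivalent in $CExtd(E,\mathfrak{g})$. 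The key observation is that (\ref{equ1})--(\ref{equ3}) are obtained from the conditions (\ref{q1})--(\ref{q2}) of Lemma~\ref{l1} by the substitution $x\mapsto q^{-1}(x)$, legitimate precisely because $q$ is invertible: applying $q^{-1}$ to the argument in (\ref{q2}), in the middle (unlabeled) equation of Lemma~\ref{l1}, and in (\ref{q1}) produces exactly (\ref{equ1}), (\ref{equ2}) and (\ref{equ3}) respectively, after transposing the term $\delta_\mathfrak{g}(p(q^{-1}(x)))$. Hence the same pair $(p,q)$ furnishes, through Lemma~\ref{l1}, an isomorphism of Lie coalgebras $\varphi_{p,q}:\mathfrak{g}\natural^cV\to\mathfrak{g}{\natural^c}'V$ restricting to the identity on $\mathfrak{g}$, so $(\mathfrak{g}\natural^cV,\delta_E)\equiv(\mathfrak{g}{\natural^c}'V,\delta_E')$ and the map is well defined.

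Surjectivity is immediate from Theorem~\ref{t2}: any Lie coalgebra structure $(E,\delta_E)$ containing $(\mathfrak{g},\delta_\mathfrak{g})$ as a Lie sub-coalgebra is isomorphic, by an isomorphism that is the identity on $\mathfrak{g}$, to some unified co-product $\mathfrak{g}\natural^cV$, so every class in $CExtd(E,\mathfrak{g})$ lies in the image. For injectivity I would run the previous equivalence backwards. Suppose the images coincide, that is $(\mathfrak{g}\natural^cV,\delta_E)\equiv(\mathfrak{g}{\natural^c}'V,\delta_E')$; then there is an isomorphism of Lie coalgebras fixing $\mathfrak{g}$, which by Lemma~\ref{l1} has the form $\varphi_{p,q}$ for a pair $(p,q)$ with $q\in Aut_k(V)$ satisfying (\ref{q1})--(\ref{q2}). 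Reversing the substitution $x\mapsto q^{-1}(x)$ recovers (\ref{equ1})--(\ref{equ3}), so $\Omega^c(\mathfrak{g},\delta_\mathfrak{g},V)\equiv{\Omega^c}'(\mathfrak{g},\delta_\mathfrak{g},V)$ and the two classes in $\mathcal{HC}_{\mathfrak{g}}^2(V,\mathfrak{g})$ agree.

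The only real content, and the step I expect to require the most care, is the exact matching of the two families of compatibility conditions under the reparametrization $x\mapsto q^{-1}(x)$; once (\ref{equ1})--(\ref{equ3}) are identified with the reparametrized form of (\ref{q1})--(\ref{q2}), both well-definedness and injectivity collapse into the single statement of Lemma~\ref{l1}, while surjectivity is handed over directly by Theorem~\ref{t2}. Everything else is bookkeeping, provided one keeps track that $q$ ranges exactly over $Aut_k(V)$ so that $q^{-1}$ exists and the substitution is a bijection of $V$; this is guaranteed by the final assertion of Lemma~\ref{l1} that $\varphi_{p,q}$ is an isomorphism if and only if $q$ is a linear isomorphism.
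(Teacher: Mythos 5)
Your proposal is correct and follows exactly the route the paper takes: the paper's proof is the one-line citation of Theorem \ref{t2} (surjectivity) and Lemma \ref{l1} (well-definedness and injectivity, via the identification of (\ref{equ1})--(\ref{equ3}) with the reparametrized (\ref{q1})--(\ref{q2})), and you have simply written out the bookkeeping that the paper leaves implicit.
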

\begin{proof}
It can be directly obtained by Theorem \ref{t2} and Lemma \ref{l1}.
\end{proof}

\section{Unified bi-products for Lie bialgebras}
In this section, we present a general theory of extending structures of Lie bialgebras using those results in the context of Lie algebras and Lie coalgebras.

\begin{defi}
Let $(\mathfrak{g}, [\cdot,\cdot], \delta_\mathfrak{g})$ be a Lie bialgebra and $V$ a vector space. An {\bf extending datum} of $(\mathfrak{g}, [\cdot,\cdot], \delta_\mathfrak{g})$ by $V$ is a system $\Omega^{bi}(\mathfrak{g},V)=(\lhd, \rhd, f, \{\cdot,\cdot\}, \Delta_E, \Delta_V, \delta_V)$ consisting four bilinear maps and three linear maps
\begin{eqnarray*}
&&\lhd: V\times \mathfrak{g}\rightarrow V,~~~~\rhd: V\times \mathfrak{g}\rightarrow \mathfrak{g},~~~~f:  V\times V \rightarrow  \mathfrak{g},~~~ \{\cdot,\cdot\}:V\times V \rightarrow V,\\
&&\Delta_E: V\rightarrow  \mathfrak{g}\otimes V,~~~~\Delta_V: V\rightarrow  \mathfrak{g}\otimes \mathfrak{g},~~~~\delta_V: V\rightarrow V\otimes V.
\end{eqnarray*}
Suppose that $\Omega^{bi}(\mathfrak{g},V)$ is an extending datum of $(\mathfrak{g}, [\cdot,\cdot], \delta_\mathfrak{g})$ by $V$. Denote by $\mathfrak{g}\natural_{\Omega^{bi}(\mathfrak{g},V)}V=\mathfrak{g}\natural^{bi}V$ the vector space $E=\mathfrak{g}\oplus V$ with the bilinear map
$[\cdot,\cdot]: E\times E \rightarrow E$ given by (\ref{qL1}) and the linear map
$\delta_E: E\rightarrow E\otimes E$ given by (\ref{eq1}).
If $\mathfrak{g}\natural^{bi}V$ is a Lie bialgebra with the Lie bracket given by (\ref{qL1}) and the Lie co-bracket given by (\ref{eq1}), then $\mathfrak{g}\natural^{bi}V$ is called
the {\bf unified bi-product} of $(\mathfrak{g}, [\cdot,\cdot], \delta_\mathfrak{g})$ and $\Omega^{bi}(\mathfrak{g},V)$, and $\Omega^{bi}(\mathfrak{g},V)$ is called a {\bf Lie bialgebraic extending structure} of $(\mathfrak{g}, [\cdot,\cdot], \delta_\mathfrak{g})$ by $V$. We denote the set of Lie bialgebraic extending structure of $(\mathfrak{g}, [\cdot,\cdot], \delta_\mathfrak{g})$ by $V$  by $\mathcal{LBI}(\mathfrak{g},V)$.
\end{defi}
\begin{theo}
Let $(\mathfrak{g},[\cdot,\cdot],\delta_{\mathfrak{g}})$ be a Lie bialgebra, $V$  a vector space and $\Omega^{bi}(\mathfrak{g},V)$ be an extending datum of
$(\mathfrak{g},[\cdot,\cdot], \delta_{\mathfrak{g}})$ by $V$. Then $\mathfrak{g}\natural^{bi} V$ is a unified bi-product if and only if the following
compatibility conditions hold:
\begin{eqnarray*}
(BE1)~~&&\text{$(\lhd,\rhd,f,\{\cdot,\cdot\})$ is a Lie extending structure of $\mathfrak{g}$ by $V$, and $(\Delta_E, \Delta_V, \delta_V)$ }is\\
 &&\text{a Lie coalgebraic extending structure of $(\mathfrak{g},\delta_\mathfrak{g})$ by $V$;}\\
(BE2)~~ &&-\Delta_V(x\lhd a)-\delta_\mathfrak{g}(x\rhd a)=(\tau-I\otimes I)(I\otimes R_\rhd(a))\Delta_E(x)+a.\Delta_V(x)\\
&&-(L_{\rhd}(x)\otimes I+I\otimes L_{\rhd}(x))\delta_\mathfrak{g}(a),\\
(BE3)~~&&\Delta_E(x\lhd a)=(-ad(a)\otimes I+I\otimes R_\lhd(a))\Delta_E(x)+(R_\rhd(a)\otimes I)\delta_V(x)+(I\otimes L_\lhd(x))\delta_\mathfrak{g}(a),\\
(BE4)~~&&\delta_V(x\lhd a)=(I\otimes R_\lhd(a)+R_\lhd(a)\otimes I)\delta_V(x),\\
(BE5)~~&&\delta_\mathfrak{g}(f(x,y))+\Delta_V(\{x,y\})
=(I\otimes I-\tau)(I\otimes f(x,\cdot))\Delta_E(y)\\
&&+(L_\rhd(x)\otimes I+I\otimes L_\rhd(x) )\Delta_V(y)
-(I\otimes I-\tau)(I\otimes f(y,\cdot))\Delta_E(x)\\
&&-(L_\rhd(y)\otimes I+I\otimes L_\rhd(y))\Delta_V(x),\\
(BE6)~~&&\Delta_E(\{x,y\})=(L_\rhd(x)\otimes I+I\otimes \{x,\cdot\})\Delta_E(y)
+(I\otimes L_\lhd(x))\Delta_V(y)\\
&&+(f(x,\cdot)\otimes I)\delta_V(y)
-(L_\rhd(y)\otimes I+I\otimes \{y,\cdot\})\Delta_E(x)\\
&&-(I\otimes L_\lhd(y))\Delta_V(x)-(f(y,\cdot)\otimes I)\delta_V(x),\\
(BE7)~~&&\delta_V(\{x,y\})=(I\otimes I-\tau)(L_\lhd(x)\otimes I)\Delta_E(y)
+(\{x,\cdot\}\otimes I+I\otimes \{x,\cdot\})\delta_V(y)\\
&&-(I\otimes I-\tau)(L_\lhd(y)\otimes I)\Delta_E(x)
-(\{y,\cdot\}\otimes I+I\otimes \{y,\cdot\})\delta_V(x),\end{eqnarray*}
for all $a\in \mathfrak{g}$ and $x$, $y\in V$, where $L_\rhd(x)a=R_\rhd(a)x=x\rhd a$,
$L_\lhd(x)a=R_\lhd(a)x=x\lhd a$ and $ad(a)b=[a,b]$ for all $a\in \mathfrak{g}$.
\end{theo}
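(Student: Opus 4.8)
The plan is to use that a Lie bialgebra structure on $E$ consists of three independently checkable pieces of data: $(E,[\cdot,\cdot])$ is a Lie algebra, $(E,\delta_E)$ is a Lie coalgebra, and the pair satisfies the cocycle condition (\ref{lia3}). The first two pieces are already resolved by the preceding theory. Indeed, the bracket on $\mathfrak{g}\natural^{bi}V$ is exactly the bracket (\ref{qL1}) of the unified product, so by Theorem 2.2 in \cite{AM1}, $(E,[\cdot,\cdot])$ is a Lie algebra if and only if $(\lhd,\rhd,f,\{\cdot,\cdot\})$ is a Lie extending structure of $\mathfrak{g}$ by $V$; and the co-bracket is exactly (\ref{eq1}), so by Theorem \ref{t1}, $(E,\delta_E)$ is a Lie coalgebra if and only if $(\Delta_E,\Delta_V,\delta_V)$ is a Lie coalgebraic extending structure of $(\mathfrak{g},\delta_\mathfrak{g})$ by $V$. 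Together these are precisely $(BE1)$. Hence the entire content of the theorem beyond $(BE1)$ is that the cocycle condition (\ref{lia3}) on $E$ is equivalent to $(BE2)$--$(BE7)$, and I would devote the rest of the proof to this equivalence.

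Next I would exploit the bilinearity and antisymmetry of the cocycle condition $u.\delta_E(v)-v.\delta_E(u)=\delta_E([u,v])$ to reduce it to pairs $(u,v)$ drawn from the decomposition $E=\mathfrak{g}\oplus V$: the pair $(a,b)$ with $a,b\in\mathfrak{g}$, the mixed pair $(a,x)$ with $a\in\mathfrak{g}$, $x\in V$, and the pair $(x,y)$ with $x,y\in V$. For $a,b\in\mathfrak{g}$ the condition holds automatically, since $[\cdot,\cdot]$ and $\delta_E$ restrict to $[\cdot,\cdot]$ and $\delta_\mathfrak{g}$ on $\mathfrak{g}$, which already satisfy (\ref{lia3}) because $(\mathfrak{g},[\cdot,\cdot],\delta_\mathfrak{g})$ is a Lie bialgebra. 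It therefore remains to expand the mixed pair and the $V$--$V$ pair.

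The key structural observation is that $E\otimes E$ decomposes as $\mathfrak{g}\otimes\mathfrak{g}\oplus\mathfrak{g}\otimes V\oplus V\otimes\mathfrak{g}\oplus V\otimes V$, and the cocycle condition, being an identity in $E\otimes E$, holds if and only if it holds in each summand. For the mixed pair I would compute $[a,x]=-x\rhd a-x\lhd a$ from (\ref{qL1}), substitute the definition (\ref{eq1}) of $\delta_E$ on $x\rhd a\in\mathfrak{g}$ and on $x\lhd a\in V$, and expand $a.\delta_E(x)$ and $x.\delta_E(a)$ using the bracket (\ref{qL1}); since $\delta_E=-\tau\delta_E$ by $(CLE1)$, the $V\otimes\mathfrak{g}$ summand is $-\tau$ applied to the $\mathfrak{g}\otimes V$ summand and carries no new information. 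Reading off the $\mathfrak{g}\otimes\mathfrak{g}$, $\mathfrak{g}\otimes V$, and $V\otimes V$ components then yields exactly $(BE2)$, $(BE3)$, $(BE4)$. Running the same scheme on the $V$--$V$ pair, with $[x,y]=f(x,y)+\{x,y\}$ from (\ref{qL1}), produces $(BE5)$, $(BE6)$, $(BE7)$ from the three corresponding components.

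The main obstacle is the bookkeeping in these expansions: the brackets $[a,w]$ for $w\in V$ are not homogeneous but split into a $\mathfrak{g}$-part $-w\rhd a$ and a $V$-part $-w\lhd a$, so each term of $a.\delta_E(x)$ contributes to two different summands of $E\otimes E$, and one must carefully re-express the outcome through the operators $ad(a)$, $L_\rhd(x)$, $R_\rhd(a)$, $L_\lhd(x)$, $R_\lhd(a)$, $f(x,\cdot)$ and $\{x,\cdot\}$ appearing in $(BE2)$--$(BE7)$, while tracking the twist maps $\tau$ and $\tau_{12}$. The antisymmetry reduction via $(CLE1)$ is what collapses the number of independent conditions from four to three in each case, so I would state it explicitly before reading off components. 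No single step is deep; the care lies entirely in organizing the tensor-component bookkeeping so that the six identities emerge cleanly.
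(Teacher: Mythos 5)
Your proposal is correct and follows essentially the same route as the paper's own proof: reduce to (BE1) via the unified product and co-product theorems, observe the cocycle condition is automatic on $\mathfrak{g}\otimes\mathfrak{g}$, and then expand the mixed pair and the $V$--$V$ pair, reading off the components of $E\otimes E$ (with the $V\otimes\mathfrak{g}$ component redundant by antisymmetry of $\delta_E$) to obtain $(BE2)$--$(BE4)$ and $(BE5)$--$(BE7)$ respectively. No gaps; the remaining work is exactly the bookkeeping you describe.
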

\begin{proof}
By the definitions of unified product for Lie algebras and unified coproduct for Lie coalgebras,
$(E, [\cdot,\cdot])$ is a Lie algebra if and only if $(\lhd,\rhd,f,\{\cdot,\cdot\})$ is a Lie extending structure of $\mathfrak{g}$ by $V$, and $(E, \delta_E)$ is a Lie coalgebra if and only if $(\Delta_E, \Delta_V, \delta_V)$ is a Lie coalgebraic extending structure of  $(\mathfrak{g},\delta_\mathfrak{g})$ by $V$.
Then we only need to consider that $\delta_E([a+x,b+y])=(a+x).\delta_E(b+y)-(b+y).\delta_E(a+x)$ for all
$a$, $b\in \mathfrak{g}$, and $x$, $y\in V$. Obviously, $\delta_E([a,b])=a.\delta_E(b)-b.\delta_E(a)$ holds for all $a$, $b\in \mathfrak{g}$. By skew-symmetry of Lie brackets, it is enough to prove that
$\delta_E([a,x])=a.\delta_E(x)-x.\delta_E(a)$ and $\delta_E([x,y])=x.\delta_E(y)-y.\delta_E(x)$ for all
$a\in \mathfrak{g}$, and $x$, $y\in V$ if and only if $(BE2)$-$(BE7)$ hold.

Since
\begin{eqnarray*}
\delta_E([a,x])&=&-\delta_E(x\rhd a+x\lhd a)\\
&=&-\delta_\mathfrak{g}(x\rhd a)-\Delta_E(x\lhd a)+\tau\Delta_E(x\lhd a)-\Delta_V(x\lhd a)-\delta_V(x\lhd a),
\end{eqnarray*}
and
\begin{eqnarray*}
&&a.\delta_E(x)-x.\delta_E(a)\\
&&=(ad(a)\otimes I-I\otimes R_{\rhd}(a)-I\otimes R_{\lhd}(a))\Delta_E(x)
-(-R_{\rhd}(a)\otimes I-R_{\lhd}(a)\otimes I+I\otimes ad(a))\tau\Delta_E(x)\\
&&+a.\Delta_V(x)
-(I\otimes R_\rhd(a)+I\otimes R_{\lhd}(a)+R_{\rhd}(a)\otimes I+R_\lhd(a)\otimes I)\delta_V(x)\\
&&-(L_\rhd(x)\otimes I+L_\lhd(x)\otimes I+I\otimes L_\rhd(x)+I\otimes L_\lhd(x))\delta_\mathfrak{g}(a),
\end{eqnarray*}
it is easy to get that $\delta_E([a,x])=a.\delta_E(x)-x.\delta_E(a)$ for all
$a\in \mathfrak{g}$, and $x\in V$ if and only if $(BE2)$-$(BE4)$ hold.

Similarly, by
\begin{eqnarray*}
\delta_E([x,y])&=&\delta_E(f(x,y)+\{x,y\})\\
&=&\delta_\mathfrak{g}(f(x,y))+\Delta_E(\{x,y\})-\tau\Delta_E(\{x,y\})
+\Delta_V(\{x,y\})+\delta_V(\{x,y\}),
\end{eqnarray*}
and
\begin{eqnarray*}
&&x.\delta_E(y)-y.\delta_E\delta(x)\\
&&=x.(\Delta_E(y)-\tau\Delta_E(y)+\Delta_V(y)+\delta_V(y))\\
&&-y.(\Delta_E(x)-\tau\Delta_E(x)+\Delta_V(x)+\delta_V(x))\\
&&=((L_\lhd(x)+L_\rhd(x))\otimes I+I\otimes f(x,\cdot)+I\otimes \{x,\cdot\})\Delta_E(y)\\
&&-(I\otimes (L_\lhd(x)+L_\rhd(x))+f(x,\cdot)\otimes I+ \{x,\cdot\}\otimes I)\tau\Delta_E(y)\\
&&+((L_\lhd(x)+L_\rhd(x))\otimes I+I\otimes (L_\lhd(x)+L_\rhd(x)))\Delta_V(y)\\
&&+(f(x,\cdot)\otimes I+\{x,\cdot\}\otimes I+I\otimes f(x,\cdot)+I\otimes \{x,\cdot\})\delta_V(y)\\
&&-((L_\lhd(y)+L_\rhd(y))\otimes I+I\otimes f(y,\cdot)+I\otimes \{y,\cdot\})\Delta_E(x)\\
&&+(I\otimes (L_\lhd(y)+L_\rhd(y))+f(y,\cdot)\otimes I+ \{y,\cdot\}\otimes I)\tau\Delta_E(x)\\
&&-((L_\lhd(y)+L_\rhd(y))\otimes I+I\otimes (L_\lhd(y)+L_\rhd(y)))\Delta_V(x)\\
&&-(f(y,\cdot)\otimes I+\{y,\cdot\}\otimes I+I\otimes f(y,\cdot)+I\otimes \{y,\cdot\})\delta_V(x),
\end{eqnarray*}
it is easy to see that $\delta_E([x,y])=x.\delta_E(y)-y.\delta_E(x)$ for all
$x$, $y\in V$ if and only if $(BE5)$-$(BE7)$ hold.
\end{proof}

Note that $\mathfrak{g}\natural^{bi} V$ is a Lie bialgebra containing $(\mathfrak{g},[\cdot,\cdot],\delta_\mathfrak{g})$ as a Lie sub-bialgebra. Next, we show that any Lie bialgebraic structure on $E$ containing $(\mathfrak{g},[\cdot,\cdot],\delta_\mathfrak{g})$ as a Lie sub-bialgebra is isomorphic to such a unified bi-product.
\begin{theo}\label{t3}
Let $(\mathfrak{g},[\cdot,\cdot],\delta_\mathfrak{g})$ be a Lie bialgebra and $E$ a vector space containing
$\mathfrak{g}$ as a subspace. Suppose that there is a Lie bialgebraic structure $(E, [\cdot,\cdot],\delta_E)$ on $E$ such that $(\mathfrak{g},[\cdot,\cdot],\delta_\mathfrak{g})$
is a Lie sub-bialgebra of $E$. Then there exists a Lie bialgebraic extending structure $\Omega^{bi}(\mathfrak{g},V)=(\lhd, \rhd, f, \{\cdot,\cdot\}, \Delta_E, \Delta_V, \delta_V)$ of $(\mathfrak{g},[\cdot,\cdot],\delta_\mathfrak{g})$ by $V$ such that
$(E, [\cdot,\cdot],\delta_E)\cong \mathfrak{g}\natural^{bi}V$.
\end{theo}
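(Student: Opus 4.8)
The plan is to build the extending datum directly from the given structure on $E$ by decomposing the bracket and co-bracket through the projections onto $\mathfrak{g}$ and onto a complement, and then to reduce the whole assertion to the two reconstruction results already available: Theorem~2.2 of \cite{AM1} for Lie algebras and Theorem~\ref{t2} for Lie coalgebras. First I would fix the natural projection $\pi_1:E\rightarrow\mathfrak{g}$, set $V=\ker(\pi_1)$, and let $\pi_2:E\rightarrow V$ be the complementary projection, so that $\pi_1|_\mathfrak{g}=I$ and $\pi_2|_\mathfrak{g}=0$. I then define the seven maps of $\Omega^{bi}(\mathfrak{g},V)$ by
\begin{eqnarray*}
&&x\rhd a=\pi_1([x,a]),\quad x\lhd a=\pi_2([x,a]),\quad f(x,y)=\pi_1([x,y]),\quad \{x,y\}=\pi_2([x,y]),\\
&&\Delta_V(x)=(\pi_1\otimes\pi_1)\delta_E(x),\quad \delta_V(x)=(\pi_2\otimes\pi_2)\delta_E(x),\quad \Delta_E(x)=(\pi_1\otimes\pi_2)\delta_E(x),
\end{eqnarray*}
for all $a\in\mathfrak{g}$ and $x,y\in V$; these are exactly the maps forced by the forms (\ref{qL1}) and (\ref{eq1}). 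The candidate isomorphism is the linear bijection $\varphi:E\rightarrow\mathfrak{g}\oplus V$, $\varphi(a)=\pi_1(a)+\pi_2(a)$, which restricts to the identity on $\mathfrak{g}$.

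The key point I would exploit is that $\varphi$ is one and the same linear isomorphism for both the algebra and the coalgebra reconstructions. Applying Theorem~2.2 of \cite{AM1} to $(E,[\cdot,\cdot])$ with this $V$ shows that $(\lhd,\rhd,f,\{\cdot,\cdot\})$ is a Lie extending structure of $\mathfrak{g}$ by $V$ and that $\varphi:(E,[\cdot,\cdot])\rightarrow\mathfrak{g}\natural V$ is an isomorphism of Lie algebras; applying Theorem~\ref{t2} to $(E,\delta_E)$ with the same $V$ shows that $(\Delta_E,\Delta_V,\delta_V)$ is a Lie coalgebraic extending structure of $(\mathfrak{g},\delta_\mathfrak{g})$ by $V$ and that $\varphi:(E,\delta_E)\rightarrow\mathfrak{g}\natural^c V$ is an isomorphism of Lie coalgebras. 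This already yields condition $(BE1)$, and since the underlying vector space of $\mathfrak{g}\natural V$, $\mathfrak{g}\natural^c V$ and $\mathfrak{g}\natural^{bi}V$ is in each case $\mathfrak{g}\oplus V$ with bracket (\ref{qL1}) and co-bracket (\ref{eq1}), the single map $\varphi$ simultaneously intertwines brackets and co-brackets.

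To finish, I would argue by transport of structure rather than by verifying $(BE2)$--$(BE7)$ by hand. Since $(E,[\cdot,\cdot],\delta_E)$ is a Lie bialgebra, the cocycle condition (\ref{lia3}) holds on $E$; because $\varphi$ is a linear isomorphism that is simultaneously a homomorphism of Lie algebras and of Lie coalgebras, this condition is carried across $\varphi$ to the bracket (\ref{qL1}) and co-bracket (\ref{eq1}) on $\mathfrak{g}\oplus V$. Hence $\mathfrak{g}\natural^{bi}V$ is a Lie bialgebra, and by the characterization theorem for unified bi-products the datum $\Omega^{bi}(\mathfrak{g},V)$ satisfies all of $(BE1)$--$(BE7)$, i.e. it is a Lie bialgebraic extending structure. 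By construction $\varphi$ is then the desired isomorphism of Lie bialgebras $(E,[\cdot,\cdot],\delta_E)\cong\mathfrak{g}\natural^{bi}V$ whose restriction to $\mathfrak{g}$ is the identity.

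I expect the only real subtlety, and therefore the main thing to get right, to be the insistence that both reconstruction theorems are applied with the \emph{same} complement $V$ and the \emph{same} map $\varphi$; once that compatibility is in place the cocycle condition transports formally and no direct manipulation of $(BE2)$--$(BE7)$ is needed. A direct verification of those six identities starting from (\ref{lia3}) is of course possible but would be the laborious route, and the transport argument is precisely what keeps the proof short.
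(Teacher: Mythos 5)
Your proposal is correct and follows essentially the same route as the paper: the paper's proof simply invokes Theorem \ref{t2} together with the Lie-algebra reconstruction theorem of \cite{AM1} (Theorem 2.4 there, the analogue of your appeal to the unified-product reconstruction --- note you cite Theorem 2.2, which is the characterization of Lie extending structures, rather than the reconstruction statement itself), with the same complement $V=\ker(\pi_1)$ and the same map $\varphi$, the cocycle condition transporting across $\varphi$ exactly as you describe. Your write-up just makes explicit the details the paper leaves implicit.
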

\begin{proof}
It can be directly obtained by Theorem \ref{t2} and Theorem 2.4 in \cite{AM1}.
\end{proof}

\begin{lemm}\label{l4}
Let $\Omega^{bi}(\mathfrak{g},V)=(\lhd, \rhd, f, \{\cdot,\cdot\}, \Delta_E, \Delta_V, \delta_V)$ and ${\Omega^{bi}}^{'}(\mathfrak{g},V)=(\lhd^{'}, \rhd^{'}, f^{'}, \{\cdot,\cdot\}^{'}, \Delta^{'}_E, \Delta^{'}_V, \delta^{'}_V)$ be two Lie bialgebraic extending structures of $(\mathfrak{g},[\cdot,\cdot], \delta_\mathfrak{g})$ by $V$ and
$\mathfrak{g}\natural^{bi}V$, $\mathfrak{g}{\natural^{bi}}^{'}V$ be the corresponding unified bi-products. Then there exists a bijection between the set of all homomorphisms of Lie bialgebras $\varphi: \mathfrak{g}\natural^{bi}V\rightarrow \mathfrak{g}{\natural}^{bi}V$ whose restriction on $\mathfrak{g}$ is the identity map and the set of pairs $(p,q)$, where $p:V\rightarrow \mathfrak{g}$ and
$q:V\rightarrow V$ are two linear maps satisfying (\ref{q1})-(\ref{q2}), and
\begin{eqnarray*}
&&q(x)\lhd^{'} a=q(x\lhd a),\\
&&p(x\lhd a)=[p(x),a]-x\rhd a+ q(x)\rhd^{'} a,\\
&&q(\{x,y\})=\{q(x),q(y)\}^{'}+q(x)\lhd^{'}p(y)-q(y)\lhd^{'}p(x),\\
&&p(\{x,y\})=[p(x),p(y)]+q(x)\rhd^{'}p(y)-q(y)\rhd^{'}p(x)+f^{'}(q(x),q(y))-f(x,y)
\end{eqnarray*}
for all $a\in\mathfrak{g}$ and $x$, $y\in V$.

The bijection from the homomorphism of Lie bialgebras $\varphi=\varphi_{p,q}: \mathfrak{g}\natural^{bi}V\rightarrow \mathfrak{g}{\natural^{bi}}^{'}V$ to $(p,q)$ is given  by $\varphi(a+x)=a+p(x)+q(x)$ for all $a\in \mathfrak{g}$ and $x\in V$. Moreover, $\varphi=\varphi_{p,q}$ is an isomorphism if and only if $q: V\rightarrow V$ is a linear isomorphism.
\end{lemm}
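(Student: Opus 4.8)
The plan is to exploit the defining property that a homomorphism of Lie bialgebras is simultaneously a homomorphism of the underlying Lie algebras and of the underlying Lie coalgebras. Since $\mathfrak{g}\natural^{bi}V$ and $\mathfrak{g}{\natural^{bi}}^{'}V$ share the underlying vector space $E=\mathfrak{g}\oplus V$, any linear map $\varphi: E\rightarrow E$ whose restriction to $\mathfrak{g}$ is the identity is completely determined by its behaviour on $V$: writing $p=\pi_1\varphi|_V: V\rightarrow\mathfrak{g}$ and $q=\pi_2\varphi|_V: V\rightarrow V$ gives $\varphi(a+x)=a+p(x)+q(x)$, and conversely every such pair $(p,q)$ defines a unique map of this form. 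This reduces the statement to characterising, among the $\varphi_{p,q}$, those that respect both structures.

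For the co-bracket I would invoke Lemma \ref{l1} directly: since the co-bracket $\delta_E$ of a unified bi-product is by definition that of the associated unified co-product, $\varphi_{p,q}$ is a homomorphism of Lie coalgebras if and only if (\ref{q1})-(\ref{q2}) hold. Dually, since the bracket $[\cdot,\cdot]$ of a unified bi-product is that of the associated unified product, I would apply the Lie-algebra analogue of Lemma \ref{l1} established in \cite{AM1}, which characterises $\varphi_{p,q}$ as a homomorphism of Lie algebras by exactly the four remaining compatibility relations.

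To confirm the algebra conditions I would verify $\varphi([a+x,b+y])=[\varphi(a+x),\varphi(b+y)]^{'}$, reducing by bilinearity and the fact that $\varphi$ fixes $\mathfrak{g}$ to the two cases $\varphi([a,x])=[\varphi(a),\varphi(x)]^{'}$ and $\varphi([x,y])=[\varphi(x),\varphi(y)]^{'}$. Using (\ref{qL1}) one has $[a,x]=-x\rhd a-x\lhd a$ and $[x,y]=f(x,y)+\{x,y\}$; expanding both sides and separating the $\mathfrak{g}$- and $V$-components then produces the relations $q(x\lhd a)=q(x)\lhd^{'}a$ and $p(x\lhd a)=[p(x),a]-x\rhd a+q(x)\rhd^{'}a$ from the first case, and the two relations involving $\{\cdot,\cdot\}^{'}$ and $f^{'}$ from the second. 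This is routine bookkeeping of which summand lies in $\mathfrak{g}$ and which in $V$.

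Combining the two halves, $\varphi_{p,q}$ is a homomorphism of Lie bialgebras precisely when all six families of conditions hold, yielding the bijection. For the isomorphism criterion I would note that with respect to $E=\mathfrak{g}\oplus V$ the map $\varphi_{p,q}$ is block lower-triangular with diagonal blocks $I$ and $q$, so it is a linear isomorphism if and only if $q\in Aut_k(V)$; since the inverse of a bialgebra isomorphism is again a bialgebra homomorphism, $\varphi_{p,q}$ is then an isomorphism of Lie bialgebras exactly when $q$ is bijective. I do not expect a genuine obstacle here: the result is the superposition of the already-established algebra and coalgebra cases, and the only point demanding care is the clean separation of the $\mathfrak{g}$- and $V$-components in the two bracket expansions.
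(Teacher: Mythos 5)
Your proposal is correct and follows essentially the same route as the paper: the paper's proof simply observes that a Lie bialgebra homomorphism is both a Lie algebra and a Lie coalgebra homomorphism and then cites Lemma \ref{l1} together with Lemma 2.5 of \cite{AM1}. You additionally spell out the component-wise verification of the four bracket conditions, which the paper leaves implicit, but this is the same argument in more detail.
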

\begin{proof}
Since a homomorphism of Lie bialgebras is a homomorphism of Lie algebras and also a homomorphism of Lie coalgebras, it can be  directly obtained by Lemma \ref{l1} and Lemma 2.5 in \cite{AM1}.
\end{proof}

\begin{defi}
Let $(\mathfrak{g},[\cdot,\cdot], \delta_\mathfrak{g})$ be a Lie bialgebra and $V$ a vector space. Two Lie bialgebra extending structures $\Omega^{bi}(\mathfrak{g},V)=(\lhd, \rhd, f, \{\cdot,\cdot\}, \Delta_E, \Delta_V, \delta_V)$ and ${\Omega^{bi}}^{'}(\mathfrak{g},V)=(\lhd^{'}, \rhd^{'}, f^{'}, \{\cdot,\cdot\}^{'}, \Delta^{'}_E, \Delta^{'}_V, \delta^{'}_V)$  of $(\mathfrak{g},[\cdot,\cdot],\delta_\mathfrak{g})$ by $V$ are called {\bf equivalent}, if there exists a pair $(p,q)$ of linear maps, where $p: V\rightarrow \mathfrak{g}$ and $q\in Aut_k(V)$ such that $(\Delta^{'}_E, \Delta^{'}_V, \delta^{'}_V)$ can be obtained from $(\Delta_E, \Delta_V, \delta_V)$ by $(p,q)$ as (\ref{equ1})-(\ref{equ3}) and
\begin{eqnarray}
&&x\lhd^{'} a=q(q^{-1}(x)\lhd a),\\
&&x\rhd^{'} a=p(q^{-1}(x)\lhd a)+q^{-1}(x) \rhd a-[p(q^{-1}(x)),a],\\
&&f^{'}(x,y)=f(q^{-1}(x),q^{-1}(y))+p(\{q^{-1}(x),q^{-1}(y)\})-[p(q^{-1}(x)),p(q^{-1}(y))]\nonumber\\
&&-p(q^{-1}(x)\lhd p(q^{-1}(y)))-q^{-1}(x)\rhd p(q^{-1}(y))+p(q^{-1}(y)\lhd p(q^{-1}(x)))+q^{-1}(y)\rhd p(q^{-1}(x)),\\
&&\{x,y\}^{'}=q(\{q^{-1}(x),q^{-1}(y)\})-q(q^{-1}(x)\lhd p(q^{-1}(y)))+q(q^{-1}(y)\lhd p(q^{-1}(x)))
\end{eqnarray}
for all $a\in \mathfrak{g}$ and $x$, $y\in V$. We denote $\Omega^{bi}(\mathfrak{g},V)\equiv {\Omega^{bi}}^{'}(\mathfrak{g},V)$.
\end{defi}

\begin{theo}\label{tt1}
Let $(\mathfrak{g}, [\cdot,\cdot], \delta_\mathfrak{g})$ be a Lie bialgebra, $E$ a vector space containing $\mathfrak{g}$ as a subspace and
$V$ be a complement of $\mathfrak{g}$ in $E$.
Denote $\mathcal{HBI}_{\mathfrak{g}}^2(V,\mathfrak{g}):=\mathcal{LBI}(\mathfrak{g},V)/\equiv$. Then the map
\begin{eqnarray}
\mathcal{HBI}_{\mathfrak{g}}^2(V,\mathfrak{g})\rightarrow BExtd(E,\mathfrak{g}),~~~~\overline{\Omega^{bi}(\mathfrak{g},V)}\rightarrow (\mathfrak{g}\natural^{bi} V,\delta_E)
\end{eqnarray}
is bijective, where $\overline{\Omega^{bi}(\mathfrak{g}, V)}$ is the equivalence class of $\Omega^{bi}(\mathfrak{g}, V)$ under $\equiv$.
\end{theo}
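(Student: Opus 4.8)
The plan is to prove bijectivity by treating well-definedness, injectivity, and surjectivity separately, reducing each to a result already established. The argument is formally parallel to the classification theorem for Lie coalgebras proved just before Section 4, which was deduced from Theorem~\ref{t2} and Lemma~\ref{l1}; here Theorem~\ref{t3} and Lemma~\ref{l4} play the analogous roles.

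First I would check that the map is well-defined, i.e. that equivalent Lie bialgebraic extending structures give rise to equivalent unified bi-products. Suppose $\Omega^{bi}(\mathfrak{g},V)\equiv{\Omega^{bi}}^{'}(\mathfrak{g},V)$. By definition of $\equiv$ there is a pair $(p,q)$ with $p:V\to\mathfrak{g}$ and $q\in Aut_k(V)$ relating the primed and unprimed data through (\ref{equ1})--(\ref{equ3}) and the four bracket rules. The point is that these rules are exactly the conditions of Lemma~\ref{l4} after the substitution $x\mapsto q^{-1}(x)$, $y\mapsto q^{-1}(y)$, solved for the primed operations. Granting this, Lemma~\ref{l4} supplies a Lie bialgebra isomorphism $\varphi_{p,q}:\mathfrak{g}\natural^{bi}V\to\mathfrak{g}{\natural^{bi}}^{'}V$ with $\varphi_{p,q}|_{\mathfrak{g}}=I$ (an isomorphism since $q$ is invertible), whence $\mathfrak{g}\natural^{bi}V\equiv\mathfrak{g}{\natural^{bi}}^{'}V$ in $BExtd(E,\mathfrak{g})$.

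Injectivity runs the same argument in reverse. If the two bi-products are equivalent in $BExtd(E,\mathfrak{g})$, there is a Lie bialgebra isomorphism $\varphi$ fixing $\mathfrak{g}$ pointwise; Lemma~\ref{l4} identifies it with some $\varphi_{p,q}$, where $(p,q)$ satisfies (\ref{q1})--(\ref{q2}) and the four bracket conditions and $q$ is a linear automorphism. Rewriting those identities as above returns precisely the defining relations of $\equiv$, so $\Omega^{bi}(\mathfrak{g},V)\equiv{\Omega^{bi}}^{'}(\mathfrak{g},V)$. Surjectivity is immediate from Theorem~\ref{t3}: every Lie bialgebraic structure on $E$ with $(\mathfrak{g},[\cdot,\cdot],\delta_\mathfrak{g})$ as a Lie sub-bialgebra is isomorphic, through a map fixing $\mathfrak{g}$, to a unified bi-product $\mathfrak{g}\natural^{bi}V$, so each class of $BExtd(E,\mathfrak{g})$ lies in the image.

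The only genuine content, and the step that will demand the most care, is the bookkeeping identifying the transformation rules in the definition of $\equiv$ with the compatibility conditions of Lemma~\ref{l4}. Each condition of Lemma~\ref{l4} expresses a primed operation evaluated on arguments $q(x)$ (such as $q(x)\lhd^{'}a$, $q(\{x,y\})$, or $f^{'}(q(x),q(y))$) in terms of unprimed data; since $q$ is bijective, replacing $x$ by $q^{-1}(x)$ and solving for the primed operation on a general argument yields exactly (\ref{equ1})--(\ref{equ3}) and the bracket formulas defining $\equiv$. This is a routine but lengthy substitution; once recorded, the bijection follows formally from Theorem~\ref{t3} and Lemma~\ref{l4}, just as the Lie coalgebra classification followed from Theorem~\ref{t2} and Lemma~\ref{l1}.
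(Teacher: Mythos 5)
Your proposal is correct and follows exactly the same route as the paper, which proves Theorem~\ref{tt1} simply by citing Theorem~\ref{t3} (surjectivity) and Lemma~\ref{l4} (well-definedness and injectivity via the correspondence $\varphi_{p,q}\leftrightarrow(p,q)$); you have merely written out the details the paper leaves implicit. No gap.
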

\begin{proof}
It can be directly obtained by Theorem \ref{t3} and Lemma \ref{l4}.
\end{proof}
\section{Special unified bi-products}
In this section, we will present several special cases of unified bi-products for Lie bialgebras.
\subsection{Crossed bi-products of Lie bialgebras}
Let $\Omega^{bi}(\mathfrak{g}, V)=(\lhd, \rhd, f, \{\cdot, \cdot\}, \Delta_E, \Delta_V, \delta_V)$ be an extending datum of $(\mathfrak{g}, [\cdot,\cdot], \delta_\mathfrak{g})$ by $V$ with $\lhd$ trivial. Denote this datum by $\Omega^{bi}(\mathfrak{g}, V)=(\rhd, f, \{\cdot, \cdot\}, \Delta_E, \Delta_V, \delta_V)$. Then in this case, $\Omega^{bi}(\mathfrak{g}, V)$ is a Lie bialgebraic extending structure of $(\mathfrak{g}, [\cdot,\cdot], \delta_\mathfrak{g})$ by $V$ if and only if $(V, \{\cdot,\cdot\}, \delta_V)$ is a Lie bialgebra, $f(x,x)=0$, $(LE6)$, $(CLE1)-(CLE5)$, $(BE5)$ and
\begin{eqnarray}
&&x \rhd [a,b]=[x\rhd a, b]+[a, x\rhd b],\\
&&\{x,y\}\rhd a=x\rhd(y\rhd a)-y\rhd (x\rhd a)+[a,f(x,y)],\\
 &&-\delta_\mathfrak{g}(x\rhd a)=(\tau-I\otimes I)(I\otimes R_\rhd(a))\Delta_E(x)+a.\Delta_V(x)\nonumber\\
 &&-(L_{\rhd}(x)\otimes I+I\otimes L_{\rhd}(x))\delta_\mathfrak{g}(a),\\
&&-(ad(a)\otimes I)\Delta_E(x)+(R_\rhd(a)\otimes I)\delta_V(x)=0,\\
&&\Delta_E(\{x,y\})=(L_\rhd(x)\otimes I+I\otimes \{x,\cdot\})\Delta_E(y)+(f(x,\cdot)\otimes I)\delta_V(y)\nonumber\\
&&-(L_\rhd(y)\otimes I+I\otimes \{y,\cdot\})\Delta_E(x)-(f(y,\cdot)\otimes I)\delta_V(x),
\end{eqnarray}
for $a$, $b\in \mathfrak{g}$ and $x$, $y\in V$. Denote the corresponding unified bi-product $\mathfrak{g}{\natural}^{bi}V$ by
$\mathfrak{g}\lozenge_{\rhd, f}^{\Delta_E, \Delta_V}V$, which is called the {\bf crossed bi-product} of the Lie bialgebras $(\mathfrak{g}, [\cdot,\cdot], \delta_\mathfrak{g})$ and $(V, \{\cdot,\cdot\}, \delta_V)$. Note that $\mathfrak{g}\lozenge_{\rhd, f}^{\Delta_E, \Delta_V}V$ is a Lie bialgebra with the following Lie bracket and Lie co-bracket
\begin{eqnarray}
&&[a+x, b+y]=[a,b]+x\rhd b-y\rhd a+f(x,y)+\{x,y\},\\
&&\delta_E(a+x)=\delta_\mathfrak{g}(a)+\Delta_E(x)-\tau\Delta_E(x)+\Delta_V(x)+\delta_V(x),
\end{eqnarray}
for all $a$, $b\in \mathfrak{g}$, $x$, $y\in V$. In this case, $(\mathfrak{g}, [\cdot,\cdot], \delta_\mathfrak{g})$ is a Lie sub-bialgebra of $\mathfrak{g}\lozenge_{\rhd, f}^{\Delta_E, \Delta_V}V$ and $\mathfrak{g}$ is an ideal of the Lie algebra $\mathfrak{g}\lozenge_{\rhd, f}^{\Delta_E, \Delta_V}V$.

\begin{prop}
Let $(\mathfrak{g}, [\cdot,\cdot], \delta_\mathfrak{g})$ be a Lie bialgebra, $E$ a vector space containing $\mathfrak{g}$ as a subspace.
Then any Lie bialgebraic structure on $E$ that contains $(\mathfrak{g}, [\cdot,\cdot], \delta_\mathfrak{g})$ as a Lie sub-bialgebra and $(\mathfrak{g}, [\cdot,\cdot])$ as an ideal of the Lie algebra $E$ is isomorphic to a crossed biproduct of Lie bialgebras $\mathfrak{g}\lozenge_{\rhd, f}^{\Delta_E, \Delta_V}V$.
\end{prop}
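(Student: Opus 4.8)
The plan is to invoke Theorem \ref{t3} and then extract the additional information carried by the ideal hypothesis. By Theorem \ref{t3}, any Lie bialgebraic structure $(E,[\cdot,\cdot],\delta_E)$ containing $(\mathfrak{g},[\cdot,\cdot],\delta_\mathfrak{g})$ as a Lie sub-bialgebra is isomorphic, via a map whose restriction to $\mathfrak{g}$ is the identity, to a unified bi-product $\mathfrak{g}\natural^{bi}V$ attached to an extending datum $\Omega^{bi}(\mathfrak{g},V)=(\lhd,\rhd,f,\{\cdot,\cdot\},\Delta_E,\Delta_V,\delta_V)$. Recall that this datum is produced from the projections $\pi_1:E\to\mathfrak{g}$ and $\pi_2:E\to V$ with $V=\ker\pi_1$; in particular, following Theorem 2.4 in \cite{AM1}, the right action is given by $x\lhd a=\pi_2([x,a])$ for $x\in V$ and $a\in\mathfrak{g}$. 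Hence it suffices to prove that the ideal hypothesis forces $\lhd$ to be trivial, since then $\mathfrak{g}\natural^{bi}V$ is, by the very definition of the preceding subsection, a crossed bi-product $\mathfrak{g}\lozenge_{\rhd,f}^{\Delta_E,\Delta_V}V$.

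First I would translate the ideal condition into the language of the extending datum. Using the bracket formula (\ref{qL1}), for $x\in V$ and $a\in\mathfrak{g}$ one obtains $[x,a]=x\rhd a+x\lhd a$ with $x\rhd a\in\mathfrak{g}$ and $x\lhd a\in V$. The assumption that $\mathfrak{g}$ is an ideal of the Lie algebra $E$ means $[E,\mathfrak{g}]\subseteq\mathfrak{g}$, so in particular $[x,a]\in\mathfrak{g}$; comparing $V$-components yields $x\lhd a=0$. Equivalently, and directly from the construction, $x\lhd a=\pi_2([x,a])=0$ because $[x,a]\in\mathfrak{g}=\ker\pi_2$. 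Either route shows that $\lhd$ is the trivial map.

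Finally I would conclude the argument: with $\lhd$ trivial, the general compatibility conditions $(BE1)$--$(BE7)$ collapse to exactly the list displayed in the definition of a crossed bi-product, so $\mathfrak{g}\natural^{bi}V=\mathfrak{g}\lozenge_{\rhd,f}^{\Delta_E,\Delta_V}V$ and therefore $(E,[\cdot,\cdot],\delta_E)$ is isomorphic to this crossed bi-product. No step constitutes a genuine obstacle; the only point that needs care is bookkeeping, namely confirming that the isomorphism furnished by Theorem \ref{t3} restricts to the identity on $\mathfrak{g}$, so that the ideal property of $\mathfrak{g}$ transfers unchanged to $\mathfrak{g}\natural^{bi}V$. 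Working directly from the explicit projection formula for $\lhd$ sidesteps even this point.
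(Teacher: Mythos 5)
Your proposal is correct and follows the same route as the paper, whose proof is simply the one-line appeal to Theorem \ref{t3}; you have merely made explicit the (easy) point that the ideal hypothesis forces $x\lhd a=\pi_2([x,a])=0$, so the unified bi-product produced by Theorem \ref{t3} is a crossed bi-product. Nothing further is needed.
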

\begin{proof}
It can be directly obtained by Theorem \ref{t3}.
\end{proof}

In particular, when $f$ and $\Delta_V$ are trivial, we denote $\mathfrak{g}\lozenge_{\rhd, f}^{\Delta_E, \Delta_V}V$ by $\mathfrak{g}\lozenge_{\rhd}^{\Delta_E}V$ simply. Then $\mathfrak{g}\lozenge_{\rhd}^{\Delta_E}V$ is a Lie bialgebra with the following Lie bracket and Lie co-bracket as follows
\begin{eqnarray*}
&&[a+x, b+y]=[a,b]+x\rhd b-y\rhd a+\{x,y\},\\
&&\delta_E(a+x)=\delta_\mathfrak{g}(a)+\Delta_E(x)-\tau\Delta_E(x)+\delta_V(x),~~~~a,~b\in \mathfrak{g}, ~~~x,~~y\in V,
\end{eqnarray*}
if and only if $(V, \{\cdot,\cdot\}, \delta_V)$ is a Lie bialgebra, $(\mathfrak{g},\rhd)$ is a left $V$-module, and
\begin{eqnarray}
&&x \rhd [a,b]=[x\rhd a, b]+[a, x\rhd b],\\
&&\{x,y\}\rhd a=x\rhd(y\rhd a)-y\rhd (x\rhd a),\\
&&(I\otimes \Delta_E)\Delta_E(x)-\tau_{12}(I\otimes \Delta_E)\Delta_E(x)=(\delta_\mathfrak{g}\otimes I)\Delta_E(x),\\
&&(I\otimes \delta_V)\Delta_E(x)-\tau_{12}(I\otimes \Delta_E)\delta_V(x)=(\Delta_E\otimes I)\delta_V(x),\\
 &&\delta_\mathfrak{g}(x\rhd a)=-(\tau-I\otimes I)(I\otimes R_\rhd(a))\Delta_E(x)+(L_{\rhd}(x)\otimes I+I\otimes L_{\rhd}(x))\delta_\mathfrak{g}(a),\\
&&-(ad(a)\otimes I)\Delta_E(x)+(R_\rhd(a)\otimes I)\delta_V(x)=0,\\
&&\Delta_E(\{x,y\})=(L_\rhd(x)\otimes I+I\otimes \{x,\cdot\})\Delta_E(y)-(L_\rhd(y)\otimes I+I\otimes \{y,\cdot\})\Delta_E(x),
\end{eqnarray}
for all $x$, $y\in V$ and $a\in \mathfrak{g}$. $\mathfrak{g}\lozenge_{\rhd}^{\Delta_E}V$ is call {\bf the left-right bicrossed sum Lie bialgebra} (see Proposition 8.3.5 in \cite{Mj}).

\subsection{Double cross sum of Lie bialgebras}
Let $\Omega^{bi}(\mathfrak{g}, V)=(\lhd, \rhd, f, \{\cdot, \cdot\}, \Delta_E, \Delta_V, \delta_V)$ be an extending datum of $(\mathfrak{g}, [\cdot,\cdot], \delta_\mathfrak{g})$ by $V$ with $f$, $\Delta_E$ and $\Delta_V$ trivial. Denote this datum by $\Omega^{bi}(\mathfrak{g}, V)=(\lhd, \rhd, \{\cdot, \cdot\}, \delta_V)$. Then in this case, $\Omega^{bi}(\mathfrak{g}, V)$ is a Lie bialgebraic extending structure of $(\mathfrak{g}, [\cdot,\cdot], \delta_\mathfrak{g})$ by $V$ if and only if $(V, \{\cdot,\cdot\}, \delta_V)$ is a Lie bialgebra, $(\mathfrak{g},\rhd)$ is a left $V$-module, $(V,\lhd)$ is a right $\mathfrak{g}$-module, $(LE3)$, $(LE4)$, $(BE4)$ and
\begin{eqnarray}
&&\{x,y\}\rhd a=x\rhd(y\rhd a)-y\rhd (x\rhd a),\\
 &&\delta_\mathfrak{g}(x\rhd a)=(L_{\rhd}(x)\otimes I+I\otimes L_{\rhd}(x))\delta_\mathfrak{g}(a),\\
&& (R_\rhd(a)\otimes I)\delta_V(x)+(I\otimes L_{\lhd}(x))\delta_\mathfrak{g}(x)=0,
\end{eqnarray}
for all $a\in \mathfrak{g}$ and $x$, $y\in V$. Denote the corresponding unified bi-product $\mathfrak{g}{\natural}^{bi}V$ by
$\mathfrak{g}\lozenge_{\lhd, \rhd}^{bi}V$, which is called the {\bf double cross sum} of the Lie bialgebras $(\mathfrak{g}, [\cdot,\cdot], \delta_\mathfrak{g})$ and $(V, \{\cdot,\cdot\}, \delta_V)$ (see Proposition 8.3.4 in \cite{Mj}). Note that $\mathfrak{g}\lozenge_{\lhd, \rhd}^{bi}V$ is a Lie bialgebra with the following Lie bracket and Lie co-bracket
\begin{eqnarray}
&&[a+x, b+y]=[a,b]+x\rhd b-y\rhd a+x\lhd b-y\lhd a+\{x,y\},\\
&&\delta_E(a+x)=\delta_\mathfrak{g}(a)+\delta_V(x),
\end{eqnarray}
for all $a$, $b\in \mathfrak{g}$, $x$, $y\in V$. In this case, $(\mathfrak{g}, [\cdot,\cdot], \delta_\mathfrak{g})$ and $(V, \{\cdot,\cdot\}, \delta_V)$ are two Lie sub-bialgebras of $\mathfrak{g}\lozenge_{\lhd, \rhd}^{bi}V$.

\begin{prop}
Let $(\mathfrak{g}, [\cdot,\cdot], \delta_\mathfrak{g})$ and $(V, \{\cdot,\cdot\}, \delta_V)$ be two Lie bialgebras, $E$ a vector space containing $\mathfrak{g}$ and $V$ as subspaces.
Then any Lie bialgebra structure on $E=\mathfrak{g}\oplus V$ as a vector space that contains $(\mathfrak{g}, [\cdot,\cdot], \delta_\mathfrak{g})$ and $(V, \{\cdot,\cdot\}, \delta_V)$ as two Lie sub-bialgebras  is isomorphic to a double cross sum of Lie bialgebras $\mathfrak{g}\lozenge_{\lhd, \rhd}^{bi}V$.
\end{prop}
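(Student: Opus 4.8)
The plan is to deduce the statement from Theorem \ref{t3}, which already asserts that any Lie bialgebra structure on $E$ containing $(\mathfrak{g}, [\cdot,\cdot], \delta_\mathfrak{g})$ as a Lie sub-bialgebra is isomorphic, via an isomorphism fixing $\mathfrak{g}$ pointwise, to a unified bi-product $\mathfrak{g}\natural^{bi}V$. The additional input here is that the complement $V$ is itself a Lie sub-bialgebra, and I expect this to force the three ``mixing'' components $f$, $\Delta_E$ and $\Delta_V$ of the extending datum to vanish, thereby collapsing the unified bi-product to a double cross sum.

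First I would fix the projections $\pi_1: E\rightarrow\mathfrak{g}$ and $\pi_2: E\rightarrow V$ determined by the given decomposition $E=\mathfrak{g}\oplus V$, so that $\ker(\pi_1)=V$. By the constructions underlying Theorem \ref{t2} and Theorem 2.4 in \cite{AM1}, the extending datum attached to $E$ is $x\rhd a=\pi_1([x,a])$, $x\lhd a=\pi_2([x,a])$, $f(x,y)=\pi_1([x,y])$, $\{x,y\}=\pi_2([x,y])$, together with $\Delta_E(x)=(\pi_1\otimes\pi_2)\delta_E(x)$, $\Delta_V(x)=(\pi_1\otimes\pi_1)\delta_E(x)$ and $\delta_V(x)=(\pi_2\otimes\pi_2)\delta_E(x)$, and the map $\varphi(a)=\pi_1(a)+\pi_2(a)$ is a Lie bialgebra isomorphism $E\cong\mathfrak{g}\natural^{bi}V$.

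The substance of the argument is then to read off the consequences of $V$ being a Lie sub-bialgebra. Since $V$ is a Lie subalgebra we have $[x,y]\in V$ for all $x,y\in V$, so $f(x,y)=\pi_1([x,y])=0$ while $\{x,y\}=\pi_2([x,y])=[x,y]$; thus $f$ is trivial and $\{\cdot,\cdot\}$ recovers the given bracket on $V$. Since $V$ is a Lie sub-coalgebra we have $\delta_E(x)\in V\otimes V$ for all $x\in V$, whence $\Delta_E(x)=(\pi_1\otimes\pi_2)\delta_E(x)=0$, $\Delta_V(x)=(\pi_1\otimes\pi_1)\delta_E(x)=0$ and $\delta_V(x)=(\pi_2\otimes\pi_2)\delta_E(x)=\delta_E(x)$; thus $\Delta_E$ and $\Delta_V$ are trivial and $\delta_V$ recovers the given co-bracket on $V$.

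With $f$, $\Delta_E$ and $\Delta_V$ all trivial, the extending datum reduces precisely to a system $(\lhd,\rhd,\{\cdot,\cdot\},\delta_V)$ of double cross sum type, and the compatibility conditions $(BE1)$--$(BE7)$ specialize to exactly those governing $\mathfrak{g}\lozenge_{\lhd,\rhd}^{bi}V$; hence $E\cong\mathfrak{g}\lozenge_{\lhd,\rhd}^{bi}V$. The only point requiring care is that the complement appearing in Theorem \ref{t3} may be taken to be the prescribed subspace $V$, which is immediate since $E=\mathfrak{g}\oplus V$ is given as a direct sum: one simply takes $\pi_1$ to be the projection onto $\mathfrak{g}$ along $V$. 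I therefore anticipate no genuine obstacle beyond this bookkeeping, the essential content being supplied by Theorem \ref{t3}.
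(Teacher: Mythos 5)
Your proposal is correct and follows essentially the same route as the paper, whose proof is just the one-line citation of Theorem \ref{t3} (together with Lemma \ref{l4}); you simply unfold that citation by computing the extending datum from the projections of the given decomposition $E=\mathfrak{g}\oplus V$ and observing that the sub-bialgebra property of $V$ forces $f$, $\Delta_E$ and $\Delta_V$ to vanish while $\{\cdot,\cdot\}$ and $\delta_V$ recover the prescribed structure on $V$. No gaps; your version is just the detailed form of the argument the paper leaves implicit.
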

\begin{proof}
It can be directly obtained by Theorem \ref{t3} and Lemma \ref{l4}.
\end{proof}
\section{Unified bi-products when $\text{dim}V=1$}
In this section, we investigate the unified bi-product associated with $(\mathfrak{g},[\cdot, \cdot], \delta_{\mathfrak{g}})$ and a one-dimensional vector space $V$ in detail.
\begin{defi}
Let $(\mathfrak{g},[\cdot, \cdot], \delta_{\mathfrak{g}})$ be a Lie bialgebra. A {\bf flag datum} of $(\mathfrak{g},[\cdot, \cdot], \delta_{\mathfrak{g}})$ is a
4-tuple $(\alpha, D, A, B)$, where $A\in \mathfrak{g}$,
$B=\sum B_{(1)}\otimes B_{(2)}\in \mathfrak{g}\wedge\mathfrak{g}$,
$\alpha: \mathfrak{g}\rightarrow k$ and $D:\mathfrak{g}\rightarrow \mathfrak{g}$ are two linear maps
satisfying the following identities
\begin{eqnarray}
\label{qe1}&&\alpha([a,b])=0,~~\delta_{\mathfrak{g}}(A)=0,~~[a,A]=\sum \alpha(a_{(2)})a_{(1)},\\
&&\label{qe12}D([a,b])=[D(a),b]+[a,D(b)]+\alpha(a)D(b)-\alpha(b)D(a),\\
&&\label{qe13}(A\otimes B-\tau_{12}(A\otimes B)+B\otimes A)+(I\otimes \delta_{\mathfrak{g}}-\tau_{12}(I\otimes \delta_{\mathfrak{g}})-(\delta_{\mathfrak{g}}\otimes I))B=0,\\
&&D(a)\otimes A-A\otimes D(a)+\alpha(a)B+a.B\nonumber\\
&&\label{qe2}+\delta_{\mathfrak{g}}(D(a))-\sum D(a_{(1)})\otimes a_{(2)}-\sum a_{(1)}\otimes D(a_{(2)})=0,
\end{eqnarray}
for all $a$, $b\in \mathfrak{g}$, where $\delta_{\mathfrak{g}}(a)=\sum a_{(1)}\otimes a_{(2)}$.
\end{defi}
Denote the set of all flag datums of $(\mathfrak{g},[\cdot, \cdot], \delta_{\mathfrak{g}})$ by $\mathcal{FLB}(\mathfrak{g},\delta_{\mathfrak{g}})$. In particular,
denote the set of all flag datums of the form $(0,D,0,B)$ by $\mathcal{TFLB}(\mathfrak{g},\delta_{\mathfrak{g}})$, where $D$ is a derivation of $\mathfrak{g}$.

\begin{prop}\label{ppr1}
Let $(\mathfrak{g},[\cdot, \cdot], \delta_{\mathfrak{g}})$ be a Lie bialgebra and $V=kx$ be a one-dimensional vector space. Then there is a bijection between the set $\mathcal{LBI}(\mathfrak{g},V)$ of all Lie bialgebraic structures of $(\mathfrak{g},[\cdot, \cdot], \delta_{\mathfrak{g}})$ by $V$  and $\mathcal{FLB}(\mathfrak{g},\delta_{\mathfrak{g}})$.
\end{prop}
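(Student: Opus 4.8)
The plan is to use the one-dimensionality of $V=kx$ to encode an extending datum $\Omega^{bi}(\mathfrak{g},V)=(\lhd,\rhd,f,\{\cdot,\cdot\},\Delta_E,\Delta_V,\delta_V)$ by a single $4$-tuple $(\alpha,D,A,B)$, and then to verify that the compatibility conditions of the preceding theorem collapse precisely onto the flag datum axioms (\ref{qe1})--(\ref{qe2}). The whole argument is essentially a dictionary between the two descriptions.

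First I would parametrise the datum. Since $x\lhd a\in V=kx$, there is a unique $\alpha:\mathfrak{g}\rightarrow k$ with $x\lhd a=\alpha(a)x$; since $x\rhd a\in\mathfrak{g}$, there is a unique linear $D:\mathfrak{g}\rightarrow\mathfrak{g}$ with $x\rhd a=D(a)$; identifying $\mathfrak{g}\otimes V\cong\mathfrak{g}$ via $g\otimes x\mapsto g$, the map $\Delta_E$ is recorded by an element $A\in\mathfrak{g}$ through $\Delta_E(x)=A\otimes x$; and $\Delta_V(x)=B\in\mathfrak{g}\otimes\mathfrak{g}$. The remaining data are then forced to vanish: $(LE1)$ gives $f(x,x)=0=\{x,x\}$, so $f$ and $\{\cdot,\cdot\}$ are trivial, while $(CLE1)$ applied to $\delta_V(x)\in k\,(x\otimes x)$ together with $\tau(x\otimes x)=x\otimes x$ forces $\delta_V=0$, and the same axiom forces $B\in\mathfrak{g}\wedge\mathfrak{g}$. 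This sets up a bijection between extending datums and $4$-tuples $(\alpha,D,A,B)$ with $A\in\mathfrak{g}$, $B\in\mathfrak{g}\wedge\mathfrak{g}$, and $\alpha$, $D$ linear.

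The core of the proof is the translation of $(LE1)$--$(LE7)$, $(CLE1)$--$(CLE5)$ and $(BE2)$--$(BE7)$ into the flag datum language. The simplification that makes this feasible is that, by multilinearity, each condition need only be checked on the single basis vector, i.e.\ with all of $x,y,z$ equal. Consequently every condition that is alternating in its $V$-slots becomes vacuous: $(LE4)$--$(LE7)$ and $(BE5)$--$(BE7)$ all reduce to $0=0$ once $f$, $\{\cdot,\cdot\}$, $\delta_V$ are set to zero and the surviving $\lhd$/$\rhd$ terms cancel in pairs, while $(CLE4)$, $(CLE5)$ and $(BE4)$ vanish directly from $\delta_V=0$. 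I expect this bookkeeping to be the main obstacle: the difficulty is not any single computation but the need to confirm, carefully and for each of the roughly twenty conditions, that a genuine cancellation occurs and no hidden constraint is overlooked.

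It then remains to match the surviving conditions. Writing $L_\rhd(x)=D$ and $L_\lhd(x)a=\alpha(a)x$, I would compute that $(LE2)$ gives $\alpha([a,b])=0$; $(LE3)$ is exactly (\ref{qe12}); $(CLE3)$ reduces, using $\tau_{12}(A\otimes A\otimes x)=A\otimes A\otimes x$, to $\delta_\mathfrak{g}(A)=0$; $(BE3)$ yields $[a,A]=\sum\alpha(a_{(2)})a_{(1)}$; and substituting $\Delta_E(x)=A\otimes x$ and $\Delta_V(x)=B$ turns $(CLE2)$ into (\ref{qe13}) and $(BE2)$ into (\ref{qe2}). The three identities coming from $(LE2)$, $(CLE3)$, $(BE3)$ are precisely the three clauses of (\ref{qe1}). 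Hence $(\alpha,D,A,B)$ satisfies the compatibility conditions if and only if it is a flag datum, and combining this with the parametrisation of the second step gives the desired bijection $\mathcal{LBI}(\mathfrak{g},V)\cong\mathcal{FLB}(\mathfrak{g},\delta_\mathfrak{g})$.
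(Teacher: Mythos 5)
Your proposal is correct and takes essentially the same route as the paper: parametrise the extending datum by $(\alpha,D,A,B)$ via the basis vector $x$, kill $f$, $\{\cdot,\cdot\}$, $\delta_V$ by $(LE1)$ and $(CLE1)$, and translate the remaining compatibility conditions into (\ref{qe1})--(\ref{qe2}). The only difference is that you spell out which of $(LE2)$, $(LE3)$, $(CLE2)$, $(CLE3)$, $(BE2)$, $(BE3)$ produces which flag-datum identity and why the alternating conditions are vacuous, details the paper compresses into ``it is easy to check.''
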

\begin{proof}
Let  $\Omega^{bi}(\mathfrak{g},V)=(\lhd, \rhd, f, \{\cdot,\cdot\}, \Delta_E, \Delta_V, \delta_V)$ be a Lie bialgebraic extending structure. By $(LE1)$ and $(CLE1)$, $f$, $\{\cdot,\cdot\}$ and $\delta_V$ are trivial. According to that $V$ is a vector space with a basis $\{x\}$, we can set
\begin{eqnarray*}
x\lhd a= \alpha(a)x,~~~x\rhd a=D(a),~~~\Delta_E(x)=A\otimes x,~~~\Delta_V(x)=B,
\end{eqnarray*}
where $A\in \mathfrak{g}$, $B\in \mathfrak{g}\otimes \mathfrak{g}$,
$\alpha: \mathfrak{g}\rightarrow k$ and $D: \mathfrak{g}\rightarrow \mathfrak{g}$ are two linear maps.
Since $\Delta_V(x)=-\tau \Delta_V(x)$, $B\in \mathfrak{g}\wedge \mathfrak{g}$. Then it is easy to check that $(BE1)$-$(BE7)$ hold if and only if (\ref{qe1})-(\ref{qe2}) are satisfied.
\end{proof}
\begin{defi}\label{defi6}
Two flag datums $(\alpha, D, A, B)$ and $(\alpha^{'}, D^{'}, A^{'}, B^{'}) \in \mathcal{FLB}(\mathfrak{g},\delta_{\mathfrak{g}})$ are called {\bf equivalent} if $\alpha^{'}=\alpha$, $A^{'}=A$ and there exist
some element $U\in \mathfrak{g}$ and $\beta\in k^{*}$ such that
\begin{eqnarray}
\label{eqq1}&& D(a)=[U,a]+\beta D^{'}(a)-\alpha(a)U,\\
\label{eqq2}&&B=\delta_{\mathfrak{g}}(U)+\beta B^{'}+U\otimes A-A\otimes U.
\end{eqnarray}
Denote it by $(\alpha, D, A, B)\equiv (\alpha^{'}, D^{'}, A^{'}, B^{'})$. Similarly, if (\ref{eqq1}) (resp. (\ref{eqq2})) holds, then we denote it by $D\equiv  D^{'}$ (resp. $B\equiv  B^{'}$).
\end{defi}

\begin{theo}\label{th6}
Let $(\mathfrak{g},[\cdot, \cdot], \delta_{\mathfrak{g}})$ be a Lie bialgebra of codimension one in a vector space $E$, and $V$ a complement of $\mathfrak{g}$ in $E$. Then we have
$BExtd(E,\mathfrak{g})\cong \mathcal{HBI}_{\mathfrak{g}}^2(V,\mathfrak{g})\cong \mathcal{FLB}(\mathfrak{g},\delta_{\mathfrak{g}})/\equiv$.
\end{theo}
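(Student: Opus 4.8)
The plan is to establish the two isomorphisms separately. The first isomorphism $BExtd(E,\mathfrak{g})\cong \mathcal{HBI}_{\mathfrak{g}}^2(V,\mathfrak{g})$ is nothing but Theorem \ref{tt1} specialized to $\dim V=1$, so no further argument is needed there. All the content lies in the second isomorphism $\mathcal{HBI}_{\mathfrak{g}}^2(V,\mathfrak{g})\cong \mathcal{FLB}(\mathfrak{g},\delta_{\mathfrak{g}})/\equiv$. By definition $\mathcal{HBI}_{\mathfrak{g}}^2(V,\mathfrak{g})=\mathcal{LBI}(\mathfrak{g},V)/\equiv$, and Proposition \ref{ppr1} already supplies a bijection $\Phi:\mathcal{LBI}(\mathfrak{g},V)\to \mathcal{FLB}(\mathfrak{g},\delta_{\mathfrak{g}})$ sending $\Omega^{bi}(\mathfrak{g},V)$ to $(\alpha,D,A,B)$ via $x\lhd a=\alpha(a)x$, $x\rhd a=D(a)$, $\Delta_E(x)=A\otimes x$, $\Delta_V(x)=B$, with $f$, $\{\cdot,\cdot\}$ and $\delta_V$ forced to be trivial. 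Thus it suffices to show that $\Phi$ carries the equivalence relation $\equiv$ on $\mathcal{LBI}(\mathfrak{g},V)$ exactly onto the equivalence relation $\equiv$ on $\mathcal{FLB}(\mathfrak{g},\delta_{\mathfrak{g}})$ of Definition \ref{defi6}; once this is done, $\Phi$ descends to a bijection of quotients.

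First I would record how a pair $(p,q)$ witnessing the equivalence of two Lie bialgebraic extending structures looks when $\dim V=1$. Writing $V=kx$, the map $p:V\to\mathfrak{g}$ is determined by a single element $p(x)=U\in\mathfrak{g}$, and $q\in Aut_k(V)$ is determined by a scalar $\beta\in k^{*}$ via $q(x)=\beta x$, so that $q^{-1}(x)=\beta^{-1}x$. This sets up a bijection $(p,q)\leftrightarrow(U,\beta)$ matching the data appearing in Definition \ref{defi6}. I would then substitute these into the defining relations of $\equiv$ from the equivalence definition preceding Theorem \ref{tt1}, namely (\ref{equ1})--(\ref{equ3}) together with the four relations for $\lhd^{'}$, $\rhd^{'}$, $f^{'}$ and $\{\cdot,\cdot\}^{'}$.

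The substitution is a direct bookkeeping computation. Since $\dim V=1$ forces $f$, $\{\cdot,\cdot\}$ and $\delta_V$ to vanish for every member of $\mathcal{LBI}(\mathfrak{g},V)$ (as in Proposition \ref{ppr1}), the relations for $f^{'}$ and $\{\cdot,\cdot\}^{'}$ are satisfied automatically and the $\delta_V$-terms in (\ref{equ1})--(\ref{equ3}) drop out. From $x\lhd^{'}a=q(q^{-1}(x)\lhd a)$ one reads off $\alpha^{'}=\alpha$, and from (\ref{equ2}) together with $\delta_V=0$ one reads off $A^{'}=A$, which are precisely the first two requirements in Definition \ref{defi6}. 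The remaining two relations give the genuine identifications: the relation for $\rhd^{'}$ yields $\beta D^{'}(a)=D(a)-[U,a]+\alpha(a)U$, which is exactly (\ref{eqq1}); and (\ref{equ3}), after inserting $\Delta_E(x)=A\otimes x$ and $\Delta_V(x)=B$, yields $\beta B^{'}=B-\delta_{\mathfrak{g}}(U)-U\otimes A+A\otimes U$, which is exactly (\ref{eqq2}). Hence $\Omega^{bi}(\mathfrak{g},V)\equiv{\Omega^{bi}}^{'}(\mathfrak{g},V)$ if and only if $(\alpha,D,A,B)\equiv(\alpha^{'},D^{'},A^{'},B^{'})$.

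I expect no conceptual obstacle here; the only delicate point is the careful tracking of the twist $\tau$ and of the powers of $\beta$ and $\beta^{-1}$ contributed by $q$ and $q^{-1}$ in the two tensor slots of (\ref{equ2})--(\ref{equ3}), where a misplaced sign or factor would destroy the exact match with (\ref{eqq1})--(\ref{eqq2}). Once the two equivalence relations are identified in this way, the bijection $\Phi$ of Proposition \ref{ppr1} induces a well-defined bijection $\mathcal{LBI}(\mathfrak{g},V)/\equiv\ \to\ \mathcal{FLB}(\mathfrak{g},\delta_{\mathfrak{g}})/\equiv$, that is $\mathcal{HBI}_{\mathfrak{g}}^2(V,\mathfrak{g})\cong \mathcal{FLB}(\mathfrak{g},\delta_{\mathfrak{g}})/\equiv$. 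Composing with the bijection of Theorem \ref{tt1} gives the full chain $BExtd(E,\mathfrak{g})\cong \mathcal{HBI}_{\mathfrak{g}}^2(V,\mathfrak{g})\cong \mathcal{FLB}(\mathfrak{g},\delta_{\mathfrak{g}})/\equiv$ asserted in the statement.
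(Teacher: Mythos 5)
Your proposal is correct and follows essentially the same route as the paper: the paper's own proof sets $V=kx$, parametrizes $(p,q)$ by $p(x)=u\in\mathfrak{g}$ and $q(x)=\beta x$ with $\beta\in k^{*}$, and then invokes Lemma \ref{l4}, Theorem \ref{tt1} and Proposition \ref{ppr1}, which is exactly your argument with the substitution computations left implicit. Your explicit verification that (\ref{equ2})--(\ref{equ3}) and the $\rhd^{'}$-relation reduce to (\ref{eqq1})--(\ref{eqq2}) is accurate and simply fills in details the paper omits.
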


\begin{proof}
Set $V=kx$. Since in Lemma \ref{l4}, $p: V\rightarrow \mathfrak{g}$ is a linear map and
$q:V\rightarrow V$ is a linear isomorphism, we can set $p(x)=u\in \mathfrak{g}$ and
$q(x)=\beta x$ for some $\beta\in k^{*}$. Then this theorem follows from Lemma \ref{l4}, Theorem \ref{tt1} and Proposition \ref{ppr1}.
\end{proof}

\begin{rema}
Let $(\alpha, D^{'}, A, B^{'}) \in \mathcal{FLB}(\mathfrak{g},\delta_{\mathfrak{g}})$. By Lemma \ref{l4} and Theorem \ref{th6}, $(\alpha, D, A, B)$ also belongs to $\mathcal{FLB}(\mathfrak{g},\delta_{\mathfrak{g}})$ where $D$ and $B$ are obtained from $D^{'}$ and $B^{'}$ by (\ref{eqq1}) and (\ref{eqq2}).
\end{rema}

\begin{coro}\label{cc1}
Let $(\mathfrak{g},[\cdot, \cdot], \delta_{\mathfrak{g}})$ be a Lie bialgebra of codimension one in a vector space $E$, and $V$ a complement of $\mathfrak{g}$ in $E$. Suppose that $[\mathfrak{g},\mathfrak{g}]=\mathfrak{g}$ and $Z(\mathfrak{g})=0$. Then $BExtd(E,\mathfrak{g})\cong \mathcal{TFLB}(\mathfrak{g},\delta_{\mathfrak{g}})/\equiv$, where $(0,D,0, B)\equiv(0,D^{'},0, B^{'})$ if and only if there exist
some element $U\in \mathfrak{g}$ and $\beta\in k^{*}$ such that
\begin{eqnarray}
&&D(a)=[U,a]+\beta D^{'}(a),\\
&&B=\delta_{\mathfrak{g}}(U)+\beta B^{'}.
\end{eqnarray}
\end{coro}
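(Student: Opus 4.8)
The plan is to derive Corollary \ref{cc1} as a specialization of Theorem \ref{th6} under the additional hypotheses $[\mathfrak{g},\mathfrak{g}]=\mathfrak{g}$ and $Z(\mathfrak{g})=0$. By Theorem \ref{th6} we already have $BExtd(E,\mathfrak{g})\cong \mathcal{FLB}(\mathfrak{g},\delta_{\mathfrak{g}})/\equiv$, so it suffices to show that under these hypotheses every flag datum is equivalent to one of the form $(0,D,0,B)$ with $D$ a derivation (i.e.\ lies in $\mathcal{TFLB}(\mathfrak{g},\delta_{\mathfrak{g}})$), and then to simplify the equivalence relation.

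First I would show that $\alpha=0$ and $A=0$ forcibly. From the first identity in (\ref{qe1}), $\alpha([a,b])=0$ for all $a,b$, so $\alpha$ vanishes on $[\mathfrak{g},\mathfrak{g}]=\mathfrak{g}$, giving $\alpha=0$. Next, the third identity in (\ref{qe1}) reads $[a,A]=\sum\alpha(a_{(2)})a_{(1)}$, which with $\alpha=0$ becomes $[a,A]=0$ for all $a\in\mathfrak{g}$; hence $A\in Z(\mathfrak{g})=0$, so $A=0$. Feeding $\alpha=0$ into (\ref{qe12}) collapses it to $D([a,b])=[D(a),b]+[a,D(b)]$, i.e.\ $D$ is a derivation of $\mathfrak{g}$. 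Thus every element of $\mathcal{FLB}(\mathfrak{g},\delta_{\mathfrak{g}})$ already has the shape $(0,D,0,B)$ with $D\in\mathrm{Der}(\mathfrak{g})$, which means $\mathcal{FLB}(\mathfrak{g},\delta_{\mathfrak{g}})=\mathcal{TFLB}(\mathfrak{g},\delta_{\mathfrak{g}})$ under these hypotheses.

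Then I would specialize the equivalence relation of Definition \ref{defi6}. For two flag datums $(0,D,0,B)$ and $(0,D',0,B')$, the requirement $\alpha'=\alpha$ and $A'=A$ is automatic (both are $0$), and (\ref{eqq1})--(\ref{eqq2}) become, using $\alpha=0$ and $A=0$,
\begin{eqnarray*}
&&D(a)=[U,a]+\beta D'(a),\\
&&B=\delta_{\mathfrak{g}}(U)+\beta B',
\end{eqnarray*}
for some $U\in\mathfrak{g}$ and $\beta\in k^{*}$. This is exactly the stated relation on $\mathcal{TFLB}(\mathfrak{g},\delta_{\mathfrak{g}})$, so $\mathcal{FLB}(\mathfrak{g},\delta_{\mathfrak{g}})/\equiv$ equals $\mathcal{TFLB}(\mathfrak{g},\delta_{\mathfrak{g}})/\equiv$ with the simplified relation, and combining with Theorem \ref{th6} gives $BExtd(E,\mathfrak{g})\cong\mathcal{TFLB}(\mathfrak{g},\delta_{\mathfrak{g}})/\equiv$.

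The main point requiring care, rather than a genuine obstacle, is verifying that the two hypotheses are used \emph{only} to force $\alpha=0$ and $A=0$ and that nothing further is needed: one should check that identities (\ref{qe13}) and (\ref{qe2}) do not impose extra constraints beyond what is already packaged into the definition of $\mathcal{TFLB}$, and that the equivalence in Definition \ref{defi6} genuinely restricts to the claimed relation without residual conditions coming from the dropped terms $-\alpha(a)U$ and $U\otimes A-A\otimes U$. Since these dropped terms vanish identically once $\alpha=0$ and $A=0$, the reduction is clean, and the corollary follows immediately.
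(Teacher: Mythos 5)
Your proposal is correct and follows the same route as the paper: use $[\mathfrak{g},\mathfrak{g}]=\mathfrak{g}$ with the first identity of (\ref{qe1}) to force $\alpha=0$, use $Z(\mathfrak{g})=0$ with the third identity to force $A=0$ (so $\mathcal{FLB}(\mathfrak{g},\delta_{\mathfrak{g}})=\mathcal{TFLB}(\mathfrak{g},\delta_{\mathfrak{g}})$ and the equivalence relation simplifies as stated), and then invoke Theorem \ref{th6}. The paper's own proof is just a terser version of exactly this argument.
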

\begin{proof}
Since $[\mathfrak{g},\mathfrak{g}]=\mathfrak{g}$ and $Z(\mathfrak{g})=0$, we get $\alpha=0$ and $A=0$ by (\ref{qe1}). Then it can be directly obtained by
Theorem \ref{th6}.
\end{proof}
\begin{coro}\label{cof1}
Let $(\mathfrak{g},[\cdot, \cdot], \delta_{\mathfrak{g}})$ be a Lie bialgebra of codimension one in a vector space $E$, and $V$ a complement of $\mathfrak{g}$ in $E$. Suppose that $[\mathfrak{g},\mathfrak{g}]=\mathfrak{g}$, $Z(\mathfrak{g})=0$, $Der(\mathfrak{g})=Inn(\mathfrak{g})$ and $\wedge^2(\mathfrak{g})^\mathfrak{g}=0$. Then $BExtd(E,\mathfrak{g})=0$.
\end{coro}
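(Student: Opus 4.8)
The plan is to invoke Corollary~\ref{cc1} and then show that under the stronger hypotheses the quotient $\mathcal{TFLB}(\mathfrak{g},\delta_{\mathfrak{g}})/\equiv$ collapses to a single point, namely the class of the trivial flag datum $(0,0,0,0)$. Since $[\mathfrak{g},\mathfrak{g}]=\mathfrak{g}$ and $Z(\mathfrak{g})=0$, Corollary~\ref{cc1} already tells us that $BExtd(E,\mathfrak{g})\cong \mathcal{TFLB}(\mathfrak{g},\delta_{\mathfrak{g}})/\equiv$, where every flag datum has the form $(0,D,0,B)$ with $D$ a derivation of $\mathfrak{g}$ and $B\in\mathfrak{g}\wedge\mathfrak{g}$ constrained by (\ref{qe13}) and (\ref{qe2}) (with $\alpha=0$, $A=0$). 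So it suffices to prove that any such $(0,D,0,B)$ is equivalent to $(0,0,0,0)$.

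First I would handle the derivation part. Because $Der(\mathfrak{g})=Inn(\mathfrak{g})$, there exists $U\in\mathfrak{g}$ with $D(a)=[U,a]$ for all $a\in\mathfrak{g}$, i.e. $D=\mathrm{ad}(U)$. Taking $\beta=1$ and this $U$ in the equivalence relation of Corollary~\ref{cc1}, the first condition $D(a)=[U,a]+\beta D^{'}(a)$ forces $D^{'}=0$; that is, $(0,D,0,B)\equiv(0,0,0,B')$ where, by the second equivalence condition, $B=\delta_{\mathfrak{g}}(U)+B'$, hence $B'=B-\delta_{\mathfrak{g}}(U)$. Thus I have reduced to flag datums with trivial derivation, $(0,0,0,B')$.

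Next I would pin down $B'$. For a datum $(0,0,0,B')$ the defining identity (\ref{qe2}) with $\alpha=0$, $A=0$ and $D=0$ reduces to $a.B'=0$ for all $a\in\mathfrak{g}$ (the remaining terms $\delta_{\mathfrak{g}}(D(a))$, $D(a_{(1)})\otimes a_{(2)}$, etc.\ all vanish). This says precisely that $B'\in(\mathfrak{g}\wedge\mathfrak{g})^{\mathfrak{g}}=\wedge^2(\mathfrak{g})^{\mathfrak{g}}$, the space of $\mathfrak{g}$-invariants. The hypothesis $\wedge^2(\mathfrak{g})^{\mathfrak{g}}=0$ then gives $B'=0$. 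Therefore $(0,D,0,B)\equiv(0,0,0,0)$, the quotient is a single class, and $BExtd(E,\mathfrak{g})=0$.

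The only genuinely delicate point is the bookkeeping in the second step: one must check that after absorbing the inner derivation $D=\mathrm{ad}(U)$ via the equivalence, the accompanying shift of $B$ by $\delta_{\mathfrak{g}}(U)$ is exactly the transformation allowed by (\ref{eqq2}) (specialized to $A=0$), so that no cocycle obstruction survives; this is automatic here because the same $U$ appears in both (\ref{eqq1}) and (\ref{eqq2}). I expect the main obstacle to be verifying cleanly that (\ref{qe2}) really collapses to the invariance condition $a.B'=0$ rather than a weaker constraint — once that is confirmed, the hypothesis $\wedge^2(\mathfrak{g})^{\mathfrak{g}}=0$ closes the argument immediately.
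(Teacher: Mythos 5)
Your proposal is correct and follows essentially the same route as the paper: reduce via Corollary~\ref{cc1} to flag datums $(0,D,0,B)$, absorb $D$ using $Der(\mathfrak{g})=Inn(\mathfrak{g})$, and then use (\ref{qe2}) together with $\wedge^2(\mathfrak{g})^{\mathfrak{g}}=0$ to kill the remaining $B$. Your ordering (first normalizing $D$ to zero by the equivalence with $\beta=1$, then reading off $a.B'=0$ from (\ref{qe2}) for the normalized datum) is in fact slightly cleaner than the paper's direct substitution, which really yields $a.(B-\delta_{\mathfrak{g}}(b))=0$ rather than $a.B=0$, but both lead to the same conclusion.
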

\begin{proof}
By Corollary \ref{cc1}, $\alpha=0$ and $A=0$. Since $Der(\mathfrak{g})=Inn(\mathfrak{g})$, we can set $D(a)=[b,a]$ for some $b\in \mathfrak{g}$. Then taking them into (\ref{qe2}), we can get $a.B=0$ for all $a\in \mathfrak{g}$. According to $\wedge^2(\mathfrak{g})^\mathfrak{g}=0$, one can get
$B=0$. Therefore, this result follows by Corollary \ref{cc1}.
\end{proof}
\begin{coro}
Let $(\mathfrak{g},[\cdot, \cdot], \delta_{\mathfrak{g}})$ be a Lie bialgebra of codimension one in a vector space $E$, and $V$ a complement of $\mathfrak{g}$ in $E$. Suppose that $\mathfrak{g}$ is a finite-dimensional semisimple Lie algebra. Then $BExtd(E,\mathfrak{g})=0$.
\end{coro}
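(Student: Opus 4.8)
The plan is to deduce this corollary directly from Corollary \ref{cof1} by verifying that a finite-dimensional semisimple Lie algebra $\mathfrak{g}$ satisfies all four of its hypotheses: $[\mathfrak{g},\mathfrak{g}]=\mathfrak{g}$, $Z(\mathfrak{g})=0$, $Der(\mathfrak{g})=Inn(\mathfrak{g})$, and $\wedge^2(\mathfrak{g})^\mathfrak{g}=0$. The first two are immediate from the structure theory of semisimple Lie algebras over a field of characteristic zero: such a $\mathfrak{g}$ is perfect, so $[\mathfrak{g},\mathfrak{g}]=\mathfrak{g}$, and it has trivial center, so $Z(\mathfrak{g})=0$. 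The third, $Der(\mathfrak{g})=Inn(\mathfrak{g})$, is Whitehead's first lemma, i.e. $H^1(\mathfrak{g},\mathfrak{g})=0$; equivalently, every derivation of a semisimple Lie algebra is inner. These three facts I would simply cite.

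The only point requiring genuine argument is $\wedge^2(\mathfrak{g})^\mathfrak{g}=0$, the vanishing of the space of $\mathfrak{g}$-invariants in $\wedge^2\mathfrak{g}$ under the adjoint action. First I would use the Killing form $\kappa$, which is nondegenerate and invariant on a semisimple $\mathfrak{g}$ (Cartan's criterion), to identify $\mathfrak{g}$ with its dual $\mathfrak{g}^*$ as $\mathfrak{g}$-modules; this gives a $\mathfrak{g}$-module isomorphism $\wedge^2\mathfrak{g}\cong\wedge^2\mathfrak{g}^*$, under which invariant elements of $\wedge^2\mathfrak{g}$ correspond to invariant alternating bilinear forms on $\mathfrak{g}$. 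It therefore suffices to show that every invariant bilinear form on $\mathfrak{g}$ is symmetric, so that no nonzero invariant alternating form can exist.

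To see this, I would decompose $\mathfrak{g}=\bigoplus_i \mathfrak{g}_i$ into simple ideals. An invariant bilinear form restricts to each $\mathfrak{g}_i$ and pairs distinct factors through $\mathfrak{g}$-module maps $\mathfrak{g}_i\to\mathfrak{g}_j^*\cong\mathfrak{g}_j$. For $i\neq j$ the simple adjoint modules $\mathfrak{g}_i$ and $\mathfrak{g}_j$ are non-isomorphic as $\mathfrak{g}$-modules, since the ideal $\mathfrak{g}_j$ acts trivially on $\mathfrak{g}_i$ but nontrivially on $\mathfrak{g}_j$ (a module isomorphism would kill $[\mathfrak{g}_i,\mathfrak{g}_i]=\mathfrak{g}_i$, hence vanish). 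Schur's lemma thus forces the cross terms to be zero, so the form is block diagonal, and on each simple ideal the space of invariant bilinear forms is one-dimensional, spanned by the symmetric form $\kappa|_{\mathfrak{g}_i}$. Hence every invariant bilinear form on $\mathfrak{g}$ is symmetric, giving $\wedge^2(\mathfrak{g})^\mathfrak{g}=0$.

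With all four hypotheses verified, Corollary \ref{cof1} yields $BExtd(E,\mathfrak{g})=0$. The main obstacle is precisely the invariant-form computation of the previous paragraph; an alternative and perhaps slicker route would be to invoke the classical description of $(\wedge^\bullet\mathfrak{g})^\mathfrak{g}$ as an exterior algebra on primitive generators of odd degrees $2m_i-1\geq 3$ (the exponents $m_i$ of $\mathfrak{g}$ being at least $2$), which leaves degree $2$ with no invariants and hence gives $\wedge^2(\mathfrak{g})^\mathfrak{g}=0$ at once. Either way the conclusion follows.
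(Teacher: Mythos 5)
Your proof is correct and follows essentially the same route as the paper's: both verify the four hypotheses of Corollary \ref{cof1} for a finite-dimensional semisimple Lie algebra and then invoke that corollary, the only difference being that the paper asserts all four standard facts without proof while you supply an argument for $\wedge^2(\mathfrak{g})^{\mathfrak{g}}=0$. (One small caveat: your appeal to Schur's lemma to conclude that the invariant bilinear forms on each simple ideal are one-dimensional tacitly assumes $k$ algebraically closed; over a general field of characteristic zero one can instead observe that invariance of an alternating form $\omega$ gives $\omega([x,y],z)=\omega([x,z],y)$, so the trilinear form $\omega([x,y],z)$ is both antisymmetric and symmetric in suitable pairs of arguments, hence zero, and $[\mathfrak{g},\mathfrak{g}]=\mathfrak{g}$ then forces $\omega=0$.)
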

\begin{proof}
Since $\mathfrak{g}$ is a finite-dimensional semisimple Lie algebra, $[\mathfrak{g},\mathfrak{g}]=\mathfrak{g}$, $Z(\mathfrak{g})=0$, $Der(\mathfrak{g})=Inn(\mathfrak{g})$ and $\wedge^2(\mathfrak{g})^\mathfrak{g}=0$. Then this result follows by Corollary \ref{cof1}.
\end{proof}

Finally, we present a concrete example to compute $\mathcal{HBI}_{\mathfrak{g}}^2(V,\mathfrak{g})$ when $\text{dim} V=1$.
\begin{exam}
Let $\mathfrak{g}=\mathbb{C}h\oplus\mathbb{C}x \oplus \mathbb{C}y$ be the three-dimensional complex Heisenberg Lie algebra, i.e. the Lie bracket is given by
\begin{eqnarray*}
[h, x]=[h,y]=0,\;\;[x,y]=h.
\end{eqnarray*}
There is a Lie bialgebra structure on $\mathfrak{g}$ with the Lie co-bracket $\delta_\mathfrak{g}$ given by
\begin{eqnarray*}
\delta_\mathfrak{g}(h)=0,~~~\delta_\mathfrak{g}(x)=y\wedge h,~~~~\delta_\mathfrak{g}(y)=h\wedge x.\end{eqnarray*}

Since $\delta_\mathfrak{g}(A)=0$, $A=kh$ for some $k\in \mathbb{C}$. Obviously, the center of $\mathfrak{g}$ is $Z(\mathfrak{g})=\mathbb{C}h$. By (\ref{qe1}),
 we get
\begin{eqnarray}\label{eq31}
0=[a, A]=[a, kh]=\sum\alpha(a_{(2)})a_{(1)}.
\end{eqnarray}
Since $[\mathfrak{g},\mathfrak{g}]=\mathbb{C}h$, $\alpha(h)=0$ by (\ref{qe1}). Then it can be directly obtained from (\ref{eq31}) that $\alpha(y)=\alpha(x)=0$. Therefore, $\alpha=0$. Consequently,
$D$ is a derivation of $\mathfrak{g}$. By some computations, we obtain that $D$ is of the form
\begin{eqnarray*}
D(x, y, h)=(x, y, h)\left(
                      \begin{array}{ccc}
                        a_1 & b_1 & 0 \\
                        a_2 & b_2 & 0 \\
                        a_3 & b_3 & a_1+b_2 \\
                      \end{array}
                    \right).
\end{eqnarray*}

Let $U=b_3x-a_3y$, $\beta=1$ in (\ref{eqq1}). We get $D\equiv D^{'}$, where
\begin{eqnarray*}
D^{'}(x, y, h)=(x, y, h)\left(
                      \begin{array}{ccc}
                        a_1 & b_1 & 0 \\
                        a_2 & b_2 & 0 \\
                        0 & 0& a_1+b_2 \\
                      \end{array}
                    \right).
\end{eqnarray*}
Replace $D$ by $D^{'}$ in (\ref{qe13}) and (\ref{qe2}). Set $B=e_1x\wedge y+e_2y\wedge h+e_3h\wedge x$. Taking it into (\ref{qe2}) and setting $a=x$, $a=y$ and $a=h$ respectively, we obtain that
\begin{eqnarray}
&&\label{pq1}a_1k+e_1-a_2-b_1=0,~~~~~a_2k-2b_2=0,\\
&& \label{pq2}b_2k+e_1+b_1+a_2=0,~~~~~b_1k+2a_1=0.
\end{eqnarray}
By (\ref{qe13}), we get
\begin{eqnarray}
\label{pq3}ke_1=0.
\end{eqnarray}
Therefore, when $k=0$, we have $e_1=a_1=b_2=0$ and $a_2=-b_1$. In the case when $k\neq 0$, we have
\begin{itemize}
\item $k\in \mathbb{C}^\ast$, $e_1=a_1=a_2=b_1=b_2=0$,
\item $k=\pm 2\sqrt{-1}$, $e_1=0$, $a_1=\mp \sqrt{-1}b_1$, $a_2=b_1$ and $b_2= \pm\sqrt{-1}b_1$, where $b_1\in \mathbb{C}^\ast$.
\end{itemize}
Consequently, by Theorem \ref{th6},
any element in $\mathcal{FLB}(\mathfrak{g},\delta_{\mathfrak{g}})$ is equivalent to $(0, D, A, B)$ which is of the form
\begin{itemize}
\item $A=kh$, $D=0$, $B=e_2y\wedge h+e_3h\wedge x$, where $k\in \mathbb{C}$,
\item $A=0$, $D(x, y, h)=(x, y, h)\left(
                      \begin{array}{ccc}
                       0& b_1 & 0 \\
                        -b_1 & 0 & 0 \\
                        0 & 0& 0 \\
                      \end{array}
                    \right)$, $B=e_2y\wedge h+e_3h\wedge x$, where $b_1\neq 0$,
\item $A=\pm 2\sqrt{-1}h$, $D(x, y, h)=(x, y, h)\left(
                      \begin{array}{ccc}
                       \mp \sqrt{-1}b_1& b_1 & 0 \\
                        b_1 & \pm\sqrt{-1}b_1 & 0 \\
                        0 & 0& 0 \\
                      \end{array}
                    \right)$, $B=e_2y\wedge h+e_3h\wedge x$, where $b_1\neq 0$.
\end{itemize}

Therefore, by Theorem \ref{th6} and the choice of $\beta$ in Definition \ref{defi6}, $\mathcal{HBI}_{\mathfrak{g}}^2(V,\mathfrak{g})$ when $\text{dim} V=1$ can be descried by the disjoint union of the following sets.
\begin{itemize}
\item $(0, 0, kh, e_2y\wedge h+e_3h\wedge x)$, where $(0, 0, kh, e_2y\wedge h+e_3h\wedge x)\equiv(0, 0, k^{'}h, e_2^{'}y\wedge h+e_3^{'}h\wedge x) $ if and only if $k=k^{'}$ and there exists $\beta\in \mathbb{C}^{\ast}$ such that $e_2=\beta e_2^{'}$ and $e_3=\beta e_3^{'}$.
\item $(0, D, 0, e_2y\wedge h+e_3h\wedge x)$, where $D(x, y, h)=(x, y, h)\left(
                      \begin{array}{ccc}
                       0& 1 & 0 \\
                        -1 & 0 & 0 \\
                        0 & 0& 0 \\
                      \end{array}
                    \right)$, and $e_2$, $e_3\in \mathbb{C}$. In this case, $(0, D, 0, e_2y\wedge h+e_3h\wedge x)\equiv (0, D, 0, e_2^{'}y\wedge h+e_3^{'}h\wedge x)$ if and only if $e_2=e_2^{'}$ and $e_3=e_3^{'}$.
\item $(0, D, \pm 2\sqrt{-1}h,e_2y\wedge h+e_3h\wedge x)$, where
$D(x, y, h)=(x, y, h)\left(
                      \begin{array}{ccc}
                       \mp \sqrt{-1}& 1 & 0 \\
                        1 & \pm\sqrt{-1} & 0 \\
                        0 & 0& 0 \\
                      \end{array} \right)$, and $e_2$, $e_3\in \mathbb{C}$. In this case, $(0, D, \pm 2\sqrt{-1}h, e_2y\wedge h+e_3h\wedge x)\equiv (0, D, \pm 2\sqrt{-1}h, e_2^{'}y\wedge h+e_3^{'}h\wedge x)$ if and only if $e_2=e_2^{'}$ and $e_3=e_3^{'}$.
\end{itemize}

\end{exam}

\section*{Acknowledgements}
This work was supported by the Zhejiang Provincial Natural Science Foundation of China (No. LY20A010022), the National Natural Science Foundation of China (No. 11871421) and the Scientific Research Foundation of Hangzhou Normal University (No. 2019QDL012).

\small

\bigskip

Yanyong Hong
\vspace{2pt}

School of Mathematics, Hangzhou Normal University
Hangzhou 311121,  China

\vspace{2pt}
hongyanyong2008@yahoo.com

\bigskip

\end{document}